\newcommand{\red}[1]{{\color{red} #1}}
\newcommand{\blue}[1]{{\color{blue} #1}}
\def\R{\mathbb{R}}
\def\qed{~\relax\ifmmode\hskip2em \Box
 \else\unskip\nobreak\hskip1em \hfill$\Box$
 \fi \newline}
\DeclareMathOperator*{\arginf}{arg\,inf}
\DeclareMathOperator*{\argmin}{arg\,min}
\DeclareMathOperator*{\cp}{ecp}
\definecolor{mygray}{gray}{0.4}
\newtheorem{theorem}{Theorem}
\newtheorem{lemma}{Lemma}
\newtheorem{proposition}{Proposition}
\title{Proving the stability estimates of variational least-squares kernel-based methods}
\author[inst1,inst2,inst5]{Meng Chen}
\affiliation[inst1]{organization={Department of Mathematics, Nanchang University},%Department and Organization
	%addressline={},
	city={Nanchang},
	%postcode={00000},
	%state={State One},
	country={China}}
\affiliation[inst2]{organization={Institute of Mathematics and Interdisciplinary Sciences, Nanchang University},%Department and Organization
city={Nanchang},
%postcode={00000},
%state={State One},
country={China}}
\affiliation[inst5]{organization={Xinjiang Vocational and Technical University of Tianshan},%Department and Organization
	city={Kizilsu Kirgiz Autonomous Prefecture, Xinjiang},
	%postcode={00000},
	%state={State One},
	country={China}}
\author[inst3]{Leevan Ling}
\affiliation[inst3]{organization={Department of Mathematics, Hong Kong Baptist University},%Department and Organization
	%addressline={},
%	city={Nanchang},
	%postcode={00000},
	%state={State One},
	country={Kowloon Tong, Hong Kong}}
\author[inst4]{Dongfang Yun}
\affiliation[inst4]{organization={School of Mathematics and Statistics, Central South University},%Department and Organization
	city={Changsha},
	%postcode={00000},
	%state={State One},
	country={China}}
\begin{document}

\begin{frontmatter}

\begin{abstract}
Motivated by the need for the rigorous analysis of the numerical stability of variational least-squares  kernel-based methods for solving second-order elliptic partial differential equations, we provide previously lacking stability inequalities. This fills a significant theoretical gap in the previous work [Comput. Math. Appl. 103 (2021) 1-11], which provided error estimates based on a conjecture on the stability. With the stability estimate now rigorously proven, we complete the theoretical foundations and compare the convergence behavior to the proven rates. Furthermore, we establish another stability inequality involving weighted-discrete norms, and provide a theoretical proof demonstrating that the exact quadrature weights are not necessary for the weighted least-squares kernel-based collocation method to converge.
Our novel theoretical insights are validated by numerical examples, which showcase the relative efficiency and accuracy of these methods on data sets with large mesh ratios. The results confirm our theoretical predictions regarding the performance of variational least-squares kernel-based method, least-squares kernel-based collocation method, and our new weighted least-squares kernel-based collocation method. Most importantly, our results demonstrate that all methods converge at the same rate, validating the convergence theory of weighted least-squares in our proven theories.
\end{abstract}

\begin{keyword}
Weighted Least-Squares Collocation \sep Numerical Stability \sep Convergence Theory.

\emph{AMS}: 65N12\sep %: Stability and convergence of numerical methods.
 65D15\sep %: Algorithms for functional approximation.
 65N15  %: Error bounds for boundary value problems involving PDEs.

\end{keyword}
\end{frontmatter}

\section{Introduction}
{
In the early 1990s, E.J. Kansa \cite{Kansa-Multscatdataappr:90} introduced a method to solve partial differential equations (PDEs) using the meshfree characteristics of radial basis function (RBF), which is generally referred to as kernel methods nowadays. Despite its adoption in various engineering and scientific applications, the theoretical foundations of the Kansa method were not established until well into the following decade.
The initial theoretical backing attempts were unsuccessful until 2001 when Hon and Schaback \cite{Hon+Schaback-unsycollradibasi:01} identified that the original Kansa method could experience failures in rare cases due to the singularity of the underlying linear system. Building on this, the author and his collaborators provided in 2006 \cite{Ling+OpferETAL-Resumeshcolltech:06} the first solvability results for a modified Kansa method that incorporated the idea of ``overtesting''. This approach involved using a larger set of collocation points compared to the trial centers, resulting in an overdetermined system that required defining numerical solutions through a minimization process.

Further analysis by the author on the convergence in maximum norm of this overdetermined kernel collocation formulation \cite{Ling+Schaback-Stabconvunsymesh:08} was published in 2008. In 2016, Schaback published a general framework for convergence proof and demonstrated that the convergence rate in the maximum norm could match that of the interpolant under specific smoothness conditions \cite{Schaback-WellProbhaveUnif:16}.
For computational sake, recent developments have seen the establishment of convergence theories for the least-squares minimizer in the overdetermined Kansa formulation \cite{Cheung+LingETAL-leaskerncollmeth:18}.

Most recently, in 2021, Seyednazari \textit{et al.}  attempted to prove the convergence of a variational least-squares Kansa formulation. However, their published proof was found to be incomplete \cite{Seyednazari+TatariETAL-Errostabestileas:21}, indicating that there are still unresolved issues in fully validating this approach.
}
Throughout the paper, we mainly follow the notations in
\cite{Seyednazari+TatariETAL-Errostabestileas:21} (hereafter referred to as ``the referenced work'') for the sake of consistency, clarity, and to make the comparison of our results with those presented in the referenced work easy for readers.
%\cite{Agmon+DouglisETAL-Estinearbounsolu:64} and the referenced work.
We consider second-order elliptic PDEs in a bounded domain $\Omega\subset\mathbb{R}^d$ subject to the Dirichlet boundary condition
\begin{subequations}\label{eq:bvp}
\begin{align}
\mathcal{L}u &= f, \qquad \text{in} \ \Omega, \label{eq:bvp1} \\
u &= g, \qquad \text{on} \ \partial\Omega, \label{eq:bvp2}
\end{align}
\end{subequations}
for a linear second-order uniformly elliptic operator
\begin{equation*}
\mathcal{L}u \coloneqq \sum_{i,j=1}^d a_{ij}(x)\dfrac{\partial^2 u}{\partial x_i \partial x_j} - \sum_{i=1}^d b_i(x) \dfrac{\partial u}{\partial x_i} - c(x) u,
\end{equation*}
with smooth and bounded coefficients.
This paper addresses the need for a rigorous analysis of the numerical stability estimates of variational least-squares (VLS) kernel-based methods for solving second-order elliptic PDEs.
Our analysis complements the analysis presented in the referenced work, which offered error estimates based on a conjecture {\cite[Eqn.~34]{Seyednazari+TatariETAL-Errostabestileas:21}} to prove the stability estimates.
%Subject to the assumptions on the kernel, which we will outline in an upcoming section,
Our aim is to prove that, for any $0\leq q \leq \tau-2$, there exists a constant $C$ such that the following stability estimate (referenced in Lemma~3.8 of the referenced work) holds:
\begin{equation}\label{eq stab ineq}
C^{-1} h_{}^{2q}\| u^h\|^2_{H^{q+2}(\Omega)} \leq \| \mathcal{L}u^h\|^2_{L^2(\Omega)} + h^{-3} \| u^h\|^2_{L^2(\partial\Omega)},
\end{equation}
for all trial function $u^h \in U_{\Phi,X}$ in the finite dimensional trial spaces $U_{\Phi,X}$ spanned by Sobolev space $H^\tau(\Omega)$ reproducing kernels $\Phi$ centered at the set of discrete data points $X\subset\Omega$ with fill distance $h = h_{X,\Omega}$.
We provide a formal proof for the stability inequality  without using the conjecture. %\CMout{\cite[Eqn.~34]{Seyednazari+TatariETAL-Errostabestileas:21}}.
By providing a rigorous proof of \eqref{eq stab ineq}, we fill a significant gap in the existing literature and provide a comprehensive theoretical foundation for these methods.

In Section~\ref{sec lsvkm}, we introduce necessary notations to ensure the stability result is understandable to readers and review the logical structure of error analysis concerning both continuous and discrete (i.e., collocation) least-squares kernel-based methods. Section~\ref{sec:proof} contains a formal proof of inequality \eqref{eq stab ineq}. We also restate the resulting corollaries about error and condition number estimates, which are immediately valid given {in} our proof.
Next, in Section~\ref{sec WLS},  we prove another stability inequality involving weighted-discrete norms.
This inequality is the key to the convergent analysis of a weighted least-squares (WLS) kernel-based collocation method.
In Section~\ref{sec num exmp}, we compare these proven theoretical results of the various implementations of the method. This comparison provides insight into the relative efficiency and accuracy on   data sets with large mesh ratio.
We conclude in Section~\ref{sec conc} with a discussion of implications and future work.

\section{Assumptions and preliminaries}\label{sec lsvkm}

Given appropriate smoothness and boundedness assumptions on the domain $\Omega$, the differential operator $\mathcal{L}$, and the data $f\in H^{k-2}(\Omega)$ and $g\in H^{k-1/2}(\partial\Omega)$, we assume that the PDE \eqref{eq:bvp} admits a classical solution $u \in H^k(\Omega)$ for some $k\geq 2$ and satisfies a boundary regularity estimate \cite{Jost-Partdiffequa:07} of the form
\begin{equation}\label{eq: reg est}
\|u\|_{H^{j}(\Omega)} \leq C \big(\|f\|_{H^{j-2}(\Omega)} + \|g\|_{H^{j-1/2}(\partial\Omega)}\big)
\qquad\text{ {for $2\leq j\leq k$}}.
\end{equation}
In our previous proof for the error estimates involving least-squares kernel-based collocation methods for solving the PDE in \eqref{eq:bvp}, we made several standard assumptions, one being the boundary regularity in \eqref{eq: reg est}. Other assumptions involved the domain, differential operator, and solution characteristics, as outlined in Assumptions~2.1--2.4 of  \cite{Cheung+LingETAL-leaskerncollmeth:18} (hereafter referred to as ``our prior work'').
In the referenced work focusing on VLS kernel-based methods, the authors assumed the PDE was of Agmon-Douglis-Nirenberg type \cite{Agmon+DouglisETAL-Estinearbounsolu:64}, enabling them to assert the boundary regularity.

We work in some norm-equivalent Hilbert spaces $H^\tau(\Omega)$, with $\tau> d/2$ and $\tau\geq k$ the smoothness of the PDE solution, reproduced by some symmetric positive definite kernel $\Phi$ {with translation-invariance} whose Fourier transform decays algebraically as
\begin{equation}\label{fourierdecay}
C_1 {\left( 1+|\omega|_2^2 \right)}^{-\tau} \leq \widehat{\Phi}(\omega) \leq C_2 {\left( 1+|\omega|_2^2 \right)}^{-\tau}
\qquad \text{for all $\omega\in \mathbb{R}^d$},
\end{equation}
and two positive constants $C_1$ and $C_2$.
This setup allows us to {use} the Mat\'{e}rn kernel, which reproduces Sobolev spaces $H^\tau(\R^d)$, defined as:
\begin{equation}\label{eq:MaternKernel}
    \Phi_\tau(\cdot,x_j)=C_\tau  \|\cdot-x_j\|^{m-d/2}K_{\tau-d/2}(  \|\cdot-x_j\|),
\end{equation}
where $K_{\nu}$ is the modified Bessel function of the second kind  and $C_\tau=2^{1-(\tau-d/2)}/\Gamma(\tau-d/2)$ normalizes the kernel.

Let $X = \left\{ x_1, x_2, \ldots, x_{N_X} \right\}\subset \Omega\cup\partial\Omega$ be a sufficiently dense discrete set (with respect to $\Omega$, $\Phi$, and $\mathcal{L}$) of $N_X$ trial centers. We seek numerical solutions in finite-dimensional trial spaces spanned by translations of $\Phi$:
\begin{equation}\label{eq:Ux}
U_{\Phi,X} \coloneqq \text{span}\left\{ \Phi(\cdot,x_j) : x_j \in X \right\} \subset H^\tau(\Omega).
\end{equation}
The \emph{variational least-squares solution}   in the referenced work is defined by
\begin{equation}\label{eq:ls var sol}
u^h_{{VLS}} \coloneqq \frac{1}{2} \arginf_{v\in U_{\Phi,X}} \left( \|\mathcal{L} v-f\|_{L^2(\Omega)}^2 + h^{-3}\|v-g\|_{L^2(\partial\Omega)}^2 \right),
\end{equation}
whose error estimates were made in terms of the fill distance of $X$:
\begin{equation}\label{filldist}
h =  h_{X} := \sup_{\zeta\in\Omega} \min_{x_j \in X} \|\zeta-x_j\|_{\ell^2(\mathbb{R}^d)}.
\end{equation}
In later theorems, fill distances of other sets of points will be denoted with different subscripts. We also need the separation distance of $X$ given by
\begin{equation*}
	q_X \coloneqq \frac{1}{2} \min_{\substack{x_i,x_j\in X \\ x_i \neq x_j}} \|x_i-x_j\|_{\ell^2(\R^d)},
\end{equation*}
and {the} mesh ratio is $\rho_X :=  h_{}/q_X$. %\magenta{The mesh-dependent weighted least-squares functional, i.e., the objective functional on the right-hand side of \eqref{eq:ls var sol}, is given and studied in \cite{Aziz+KelloggETAL-Leastsquares:85}.  }

Since $u^h_{{VLS}} \in U_{\Phi,X}$, there exist coefficients $\{ c_j \}\subset \R^{N_X}$ such that
$$u^h_{{VLS}}(x) = \sum_{j=1}^{N_X} c_j \Phi(x, x_j).$$
Therefore, the optimization problem \eqref{eq:ls var sol} can be recast in terms of these coefficients as
\begin{eqnarray}
  && u^h_{{VLS}} \coloneqq \frac12 \arginf_{\{ c_j \}\subset \R^{N_X}} \Bigg\{   \int_\Omega \Big( \sum_{j=1}^{N_X} c_j \mathcal{L}\Phi(x, x_j)-f \Big)^2dx
  \\ \nonumber
  &&\qquad\qquad\qquad\qquad\qquad + h^{-3}\int_{\partial\Omega} \Big(\sum_{j=1}^{N_X} c_j \Phi(x, x_j)-g\Big)^2 dx \Bigg\},
  \nonumber
\end{eqnarray}
whose minimizer
{$\alpha:=[c_1,\ldots, c_{N_X} ]^T$}
%$\{ c_j \}\subset \R^{N_X}$
is the solution of the following $N_X\times N_X$ matrix system
\begin{eqnarray}
     &&    \int_\Omega \mathcal{L}\Phi(x, x_l) \Bigg(  {\sum_{j=1}^{N_X}c_j} \mathcal{L}\Phi(x, x_j)-f(x) \Bigg)dx
\label{eq:ls var sol2}
\\
     &&\qquad\qquad  + h^{-3}\int_{\partial\Omega} \Phi(x, x_l)\Bigg(  {\sum_{j=1}^{N_X}  c_j}\Phi(x, x_j)-g(x) \Bigg)dx  = 0,\nonumber
\end{eqnarray}
for $1\leq l \leq N_X$.
Upon proving the stability inequality \eqref{eq stab ineq} (i.e., Lemma 3.8 of the referenced work) in the next section, we will have a theoretically sound error estimate for $u^h_{{VLS}}$ (by Theorem 3.12 and Corollary 3.16 of the referenced work):
\begin{theorem}\label{main thm}
Let  $d\leq 3$ and $\tau\geq k\geq 4$ (hence, $\tau>d/2$). Suppose $\Omega\subset \R^d$ is a bounded domain %Lipschitz
with $C^k$-boundary $\partial\Omega$
satisfying an interior cone condition. Let $\mathcal{L}$ in the PDE \eqref{eq:bvp} be a second-order strongly elliptic operator satisfying boundary regularity estimates in the form of \eqref{eq: reg est}. Also, suppose that the solution $u\in H^k(\Omega)$ to \eqref{eq:bvp} has smoothness order $k$.
Let $\Phi:\Omega\times \Omega\to \R$ be a symmetric positive definite kernel that reproduces the Sobolev space $H^\tau(\Omega)$ with smoothness $\tau$. Let $u^h_{{VLS}}\in U_{\Phi,X}\subset H^\tau(\Omega)$, which belongs to the trial space \eqref{eq:Ux} with quasi-uniform trial centers in $X$ and fill distance $ h_{}$, denote the variational least-squares solution defined in \eqref{eq:ls var sol}.
Then, the 2-norm condition number of the linear matrix system \eqref{eq:ls var sol2} associated with \eqref{eq:ls var sol} is bounded above by $C h^{-4\tau}$. Moreover, the following error estimate holds:
\[
\| u-u^h_{{VLS}}\|_{H^t(\Omega)}\leq C h^{k-t}\| u \|_{H^k(\Omega)}
\qquad \text{for any $0\leq t \leq k$},
\]
where $C$ is a generic constant independent of $u$ and $u^h_{{VLS}}$.
\end{theorem}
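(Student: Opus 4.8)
\medskip
\noindent\textbf{Proof of Theorem~\ref{main thm} (sketch).}
The plan is to obtain both assertions from the stability estimate \eqref{eq stab ineq}, which is proved in Section~\ref{sec:proof}, combined with the least-squares error machinery developed in our prior work \cite{Cheung+LingETAL-leaskerncollmeth:18} and in the referenced work \cite{Seyednazari+TatariETAL-Errostabestileas:21}. Three ingredients enter: the near-optimal approximation property of $U_{\Phi,X}$, the minimization defining $u^h_{VLS}$ in \eqref{eq:ls var sol}, and the stability estimate \eqref{eq stab ineq}.

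First I would fix the $\Phi$-interpolant $v\in U_{\Phi,X}$ of $u$ on $X$ and invoke the standard Sobolev kernel approximation bound
\[
\|u-v\|_{H^{s}(\Omega)}\le C\,h^{k-s}\|u\|_{H^{k}(\Omega)},\qquad 0\le s\le k,
\]
which is available here since $\partial\Omega$ is smooth, $X$ satisfies the interior cone condition, and $k>d/2$. Because $\mathcal{L}$ is second order with bounded smooth coefficients, $\|\mathcal{L}(u-v)\|_{L^{2}(\Omega)}\le C\|u-v\|_{H^{2}(\Omega)}\le C\,h^{k-2}\|u\|_{H^{k}(\Omega)}$, while the multiplicative trace inequality $\|w\|_{L^{2}(\partial\Omega)}^{2}\le C\|w\|_{L^{2}(\Omega)}\|w\|_{H^{1}(\Omega)}$ gives $h^{-3}\|u-v\|_{L^{2}(\partial\Omega)}^{2}\le C\,h^{-3}\,h^{k}\,h^{k-1}\|u\|_{H^{k}(\Omega)}^{2}=C\,h^{2k-4}\|u\|_{H^{k}(\Omega)}^{2}$; thus the boundary weight $h^{-3}$ is exactly what is needed to match the order of the interior residual. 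Using $f=\mathcal{L}u$ and $g=u|_{\partial\Omega}$, evaluating the least-squares functional of \eqref{eq:ls var sol} at the competitor $v$ and invoking the minimization then yields
\[
\|\mathcal{L}u^h_{VLS}-f\|_{L^{2}(\Omega)}^{2}+h^{-3}\|u^h_{VLS}-g\|_{L^{2}(\partial\Omega)}^{2}\le C\,h^{2k-4}\|u\|_{H^{k}(\Omega)}^{2}.
\]
Next I would apply \eqref{eq stab ineq} to $u^{h}:=u^h_{VLS}-v\in U_{\Phi,X}$ with $q:=t-2$: splitting $\mathcal{L}(u^h_{VLS}-v)=(\mathcal{L}u^h_{VLS}-f)+\mathcal{L}(u-v)$ and, on $\partial\Omega$, $u^h_{VLS}-v=(u^h_{VLS}-g)+(g-v)$, the two displays above bound the right-hand side of \eqref{eq stab ineq} by $C\,h^{2(k-2)}\|u\|_{H^{k}(\Omega)}^{2}$, so $\|u^h_{VLS}-v\|_{H^{q+2}(\Omega)}\le C\,h^{k-2-q}\|u\|_{H^{k}(\Omega)}$. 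Taking $q=t-2$ (admissible, since $2\le t\le k\le\tau$ forces $0\le q\le\tau-2$) and combining with the approximation bound through the triangle inequality proves $\|u-u^h_{VLS}\|_{H^{t}(\Omega)}\le C\,h^{k-t}\|u\|_{H^{k}(\Omega)}$ for $2\le t\le k$. The remaining range $0\le t<2$ follows from a standard duality (Aubin--Nitsche type) argument based on the adjoint boundary-value problem and the $H^{2}$-estimate just obtained, as in the referenced work.

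For the conditioning claim, one writes the normal-equation matrix of \eqref{eq:ls var sol2} as $A=B+h^{-3}\Gamma$, with $B$ and $\Gamma$ the $L^{2}(\Omega)$- and $L^{2}(\partial\Omega)$-Gram matrices of $\{\mathcal{L}\Phi(\cdot,x_{j})\}$ and $\{\Phi(\cdot,x_{j})\}$; since $\alpha^{T}A\alpha=\|\mathcal{L}u^{h}\|_{L^{2}(\Omega)}^{2}+h^{-3}\|u^{h}\|_{L^{2}(\partial\Omega)}^{2}$ for $u^{h}=\sum_{j}\alpha_{j}\Phi(\cdot,x_{j})$, the case $q=0$ of \eqref{eq stab ineq}, together with $\|u^{h}\|_{H^{2}(\Omega)}\ge\|u^{h}\|_{L^{2}(\Omega)}$ and a Narcowich--Ward-type lower bound for the Gram matrix of the autocorrelation kernel $\Phi\ast\Phi$ (which reproduces $H^{2\tau}$), bounds $\lambda_{\min}(A)$ from below, while inverse and trace inequalities with standard kernel-matrix bounds control $\lambda_{\max}(A)$; under the quasi-uniformity $q_{X}\asymp h$ these combine to $\mathrm{cond}_{2}(A)\le C\,h^{-4\tau}$, exactly as in the referenced work. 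I expect the genuine difficulty to sit entirely in \eqref{eq stab ineq} itself (the content of Section~\ref{sec:proof}); within the present argument the only subtle points are the choice of the boundary-weight exponent so that the trace term matches the order of the interior residual, and the coverage of the sub-$H^{2}$ error orders by duality.
\qed
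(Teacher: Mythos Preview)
Your sketch is correct and follows the same route the paper relies on: the paper does not prove Theorem~\ref{main thm} directly but states that, once the stability inequality \eqref{eq stab ineq} is established (this is the paper's actual contribution, done in Section~\ref{sec:proof} as Theorem~\ref{thm stab}), the error estimate and condition-number bound follow from Theorem~3.12 and Corollary~3.16 of the referenced work \cite{Seyednazari+TatariETAL-Errostabestileas:21}. Your argument---interpolant as competitor, minimization bound, stability applied to $u^h_{VLS}-v$, triangle inequality, and Aubin--Nitsche duality for the low-order range---is precisely the standard least-squares machinery those cited results encode, so you have faithfully reconstructed what the paper defers to.
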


Theorem~\ref{main thm} suggests that setting $\tau=k$ has no effect on the error estimate but yields the lowest bound on the condition number. In practice, one usually chooses $\tau$ based on certain numerical considerations, under the assumption that the solution is at least as smooth as the kernel. In other words, we use $k=\tau$ in theory, even though the true solution smoothness could be greater than $\tau$.

A general framework for proving high-order convergence can be found in \cite{Schaback-WellProbhaveUnif:16}. We outline the key elements in the proof for discussions in later sections. In our context, the PDE \eqref{eq:bvp} and numerical methods satisfy the followings:
\begin{itemize}
  \item \textbf{Well-posedness} of the PDE in the sense that any PDE solution can be bounded above by that of the associated data functions under some appropriate norms. The boundary regularity \eqref{eq: reg est} serves this purpose.
  \item \textbf{Trial space} is a finite-dimensional space from which we seek a numerical approximation to the PDE solution. The assumptions made about the regularity/smoothness of the PDE solution determine the choice of the trial space.
  \item \textbf{Stability estimate}  bounds all trial functions from above using a data norm (which can be easily computed and may be either continuous or discrete), usually derived from the well-posedness inequality. Its main role is to {bound} the numerical solution via optimization. Specifically, the objective function in the definition \eqref{eq:ls var sol} of   VLS solutions is essentially (constant multiple of) the right-hand side of the stability inequality. This is typically the most challenging part of designing a convergent numerical method.
       For instance, the desired stability inequality \eqref{eq stab ineq} uses $L^2$-norms on the right-hand side, unlike the Sobolev norms in the boundary regularity. In the referenced work, a Bernstein-type inverse inequality
   \begin{equation}\label{Xinvineq}
     \|u\|_{H^{3/2}(\partial\Omega)}\stackrel{?}{\leq}  {C}  h^{-3/2}\| u\|_{L^2(\partial\Omega)}
     \text{\quad for all $u\in U_{\Phi,X}$}
   \end{equation}
   is used to derive stability inequality \eqref{eq stab ineq} from boundary regularity \eqref{eq: reg est}.
  We emphasize by using $\stackrel{?}{\leq}$ that this is just a conjectured inequality, which lacks theoretical support due to the mismatch between trial centers $X\in \Omega$ and norms on $\partial \Omega$.

  \item \textbf{Consistency estimates} use the approximation power of the trial space  to determine the  error bounds and convergence rates of the numerical method. Since the numerical solution is defined via an optimization problem, we can replace the numerical solution in the objective function with some comparison trial function in the error analysis. This comparison function could be, for instance, the interpolant of the true solution from the trial space, which provides an upper bound. The remainder of the analysis can be conducted using standard approximation theory. {Interested readers are encouraged to refer to the earlier works cited in the introduction for detailed proofs of convergence. The stability estimates provided in this paper are independent of the consistency analysis.}
\end{itemize}

%\subsubsection*{Remark 1}\label{remark1} Under close inspection, the condition $d\leq 3$ in Theorem~\ref{main thm} is unnecessary as this condition arises when the authors of the referenced work try to achieve results for lower-order Sobolev norms from the above framework for higher-orders. Their $L^2$-error estimate, which is used in the interpolation theorem of Sobolev spaces, was proven with different arguments.  \qed

%\subsubsection*{Remark 1}\label{remark1} Under close inspection, {the condition $d\leq 3$ mentioned  in the referenced work  is unnecessary for the VLS solution in Theorem~\ref{main thm}.} This condition arises when the authors of the referenced work try to achieve results for lower-order Sobolev norms from the above framework for higher-orders.
%{Their interpolation theorem of Sobolev spaces between  $L^2$-error  and   $H^2$ for $d\leq 3$
%can indeed be extended to $H^\nu$ for $d>3$ with $\nu>2$.} \qed

Evaluating the integrals in the matrix system \eqref{eq:ls var sol2} exactly can be a challenging task due to the complexity of the involved functions and the potential high-dimensionality of the irregularly-shaped domain.
To convert the matrix system in \eqref{eq:ls var sol2} into a computable scheme, we use sets of \emph{quadrature nodes} in the domain and on the boundary
\begin{equation}\label{YZ}
Y = \left\{ y_1, y_2, \cdots, y_{N_Y} \right\}\subset \Omega\cup\partial\Omega
\text{\quad and \quad}
Z = \left\{ z_1, z_2, \cdots, z_{N_Z} \right\}\subset \partial\Omega,
\end{equation}
with positive \emph{quadrature weights}, which {are} stored in a diagonal matrix for convenience,
\begin{equation}\label{W}
W= \text{Diag}\left[ \omega_1, \omega_2, \cdots, \omega_{N_Y}, \omega_{ {N_Y+1}},\cdots, \omega_{N_Y+N_Z}  \right]
\end{equation}
to approximate the integrals in $\Omega$ and on $\partial \Omega$.
%Note that Theorem~\ref{main thm} does not consider i {}ntegration error.
Then, an approximate version of {the} matrix system \eqref{eq:ls var sol2} can be written as
\begin{equation} \label{VLS mtx sys}
    \begin{bmatrix}
        [\mathcal{L}\Phi](Y,X) \\
        h^{ {-\frac32}}[\Phi](Z,X)
    \end{bmatrix}^T
     W
    \begin{bmatrix}
        [\mathcal{L}\Phi](Y,X) \\
        h^{ {-\frac32}}[\Phi](Z,X)
    \end{bmatrix} \alpha
    =
    \begin{bmatrix}
        [\mathcal{L}\Phi](Y,X) \\
        h^{ {-\frac32}}[\Phi](Z,X)
    \end{bmatrix}^T
    W
    \begin{bmatrix}
        f(Y) \\
        h^{ {-\frac32}}g(Z)
    \end{bmatrix},
\end{equation}
which is nothing but the least-squares solution to the following $(N_Y+N_Z)\times N_X$ overdetermined matrix system
{for an unknown coefficient vector $\alpha\in \R^{N_X}$:}
\begin{equation}\label{VLS mtx sys LS}
    W ^{1/2}
    \begin{bmatrix}
        [\mathcal{L}\Phi](Y,X) \\
        \vartheta[\Phi](Z,X)
    \end{bmatrix} \alpha
    =
    {W} ^{1/2}
    \begin{bmatrix}
        f(Y) \\
        \vartheta g(Z)
    \end{bmatrix},
    \text{\quad with $\vartheta=h^{ {-\frac32}}$.}
\end{equation}
If all data points have fill distances of the same magnitude and the quadrature weights  are constant, i.e., ${W} = \text{Vol}(\Omega)N_Y^{-1} I$, say in the case of Monte Carlo integration, then \eqref{VLS mtx sys LS} is exactly the kernel-based least-squares collocation (LSC) systems with oversampling in our prior work \cite{Cheung+LingETAL-leaskerncollmeth:18}. This close proximity motivates proving \eqref{eq stab ineq} by the stability inequality in the discrete counterpart.

\section{Proof of stability estimates}\label{sec:proof}

The theoretical work presented below makes use of our previously established theories on least-squares collocation solutions to \eqref{eq:bvp} from our prior work \cite{Cheung+LingETAL-leaskerncollmeth:18} to prove the target stability inequality \eqref{eq stab ineq}. We now let the points in the sets \eqref{YZ} play the role of collocation points. Furthermore, we also define the discrete norm for a function $u$ with the set $P$ of $N_P$ points by
\begin{equation}\label{Y-norm}
  \|u\|_{P}^2 :=  \|u(P)\|_{\ell^2(\R^{N_P})} ^2 = \sum_{p_i \in P} \big| u(p_i) \big|^2.
\end{equation}
Then, we can use $Y$ and $Z$ to define discrete norms. For completeness, the \emph{least-squares collocation}  solution  (\cite[Theorem 2.6]{Cheung+LingETAL-leaskerncollmeth:18}  with $\theta=2$ and $\nu=q+2$) is defined as
\begin{equation}\label{eq:H2 sol}
u^h_{{LSC}} \coloneqq \frac{1}{2} \arginf_{v\in U_{\Phi,X}}
\left( \|\mathcal{L} v-f\|_{Y}^2 + h_Y^{-d+2q} h_Z^{ { d-4-2q}}\|v-g\|_{Z}^2 \right),
\end{equation}
whose objective function stems from the following  stability estimate involving discrete norms (\cite[Lemma 3.3]{Cheung+LingETAL-leaskerncollmeth:18} with $\nu=q+2$):

\begin{proposition}\label{my stability thm}
    Under the assumptions of Theorem~\ref{main thm}, for any sets of collocation points $Y\subset\Omega\cup \partial\Omega$ and $Z\subset\partial\Omega$ satisfying the denseness requirement
    \begin{equation}\label{denseness}
      C \left(
      h_Y^{\tau-q-2} + h_Z^{\tau-q-2}
      \right) h_{}^{-\tau+q+2}
      <\frac12,
    \end{equation} the stability inequality
    \begin{equation}\label{my stab est}
    	\| u^h \|_{H^{q+2}(\Omega)}^2
    \leq
    C \left( h_Y^{d-2q} \| \mathcal{L}u^h \|_Y^2 + h_Z^{d-4-2q} \|u^h\|_Z^2 \right)
    ,\quad 0\leq q \leq \tau-2,
    \end{equation}
    holds for all trial functions $u^h\in U_{\Phi,X}\subset H^\tau(\Omega)$, with some constant $C$ that only depends on $q$, $\Omega$, $\Phi$ and $\mathcal{L}$.
\end{proposition}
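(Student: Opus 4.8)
The plan is to prove \eqref{my stab est} by the familiar three-step scheme for kernel-based collocation: bootstrap the elliptic a priori estimate onto the trial function itself, then replace the resulting continuous data norms by the discrete sampling norms via sampling inequalities, and finally absorb the leftover high-order Sobolev terms using a Bernstein-type inverse inequality for $U_{\Phi,X}$ together with the denseness hypothesis \eqref{denseness}.

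First I would apply the elliptic a priori estimate underlying \eqref{eq: reg est} to $v=u^h$ (legitimate since $q+2\le\tau$, and in the theoretically relevant case $\tau=k$ this lies in the admissible range $2\le j\le k$), treating $\mathcal{L}u^h$ as the interior datum and the Dirichlet trace $u^h|_{\partial\Omega}$ as the boundary datum:
\[
\|u^h\|_{H^{q+2}(\Omega)} \le C\big(\|\mathcal{L}u^h\|_{H^q(\Omega)} + \|u^h\|_{H^{q+3/2}(\partial\Omega)}\big).
\]
Because $u^h\in U_{\Phi,X}\subset H^\tau(\Omega)$ and $\mathcal{L}$ has smooth bounded coefficients, $\mathcal{L}u^h\in H^{\tau-2}(\Omega)$ with $\|\mathcal{L}u^h\|_{H^{\tau-2}(\Omega)}\le C\|u^h\|_{H^\tau(\Omega)}$, and the trace theorem gives $u^h|_{\partial\Omega}\in H^{\tau-1/2}(\partial\Omega)$ with $\|u^h\|_{H^{\tau-1/2}(\partial\Omega)}\le C\|u^h\|_{H^\tau(\Omega)}$. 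These two smoothness reserves, $\tau-2$ over $\Omega$ and $\tau-1/2$ over $\partial\Omega$, are exactly what the sampling inequalities will consume.

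Next I would apply a sampling inequality on the $d$-dimensional domain $\Omega$ (which satisfies an interior cone condition) to $\mathcal{L}u^h$, with target order $q$ and reserve order $\tau-2$ (admissible since $0\le q\le\tau-2$ and $\tau-2>d/2$), and a sampling inequality on the $(d-1)$-dimensional compact boundary manifold $\partial\Omega$ to $u^h|_{\partial\Omega}$, with target order $q+3/2$ and reserve order $\tau-1/2$ (admissible since $q+3/2\le\tau-1/2$), giving
\[
\|\mathcal{L}u^h\|_{H^q(\Omega)} \le C\big(h_Y^{\tau-2-q}\,\|\mathcal{L}u^h\|_{H^{\tau-2}(\Omega)} + h_Y^{d/2-q}\,\|\mathcal{L}u^h\|_Y\big),
\]
\[
\|u^h\|_{H^{q+3/2}(\partial\Omega)} \le C\big(h_Z^{\tau-2-q}\,\|u^h\|_{H^{\tau-1/2}(\partial\Omega)} + h_Z^{(d-4)/2-q}\,\|u^h\|_Z\big),
\]
where the boundary exponents follow from $(d-1)/2-(q+3/2)=(d-4)/2-q$ and $(\tau-1/2)-(q+3/2)=\tau-2-q$. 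Substituting these back and using the reserve-norm bounds of the previous step yields
\[
\|u^h\|_{H^{q+2}(\Omega)} \le C\Big(h_Y^{d/2-q}\|\mathcal{L}u^h\|_Y + h_Z^{(d-4)/2-q}\|u^h\|_Z + \big(h_Y^{\tau-2-q}+h_Z^{\tau-2-q}\big)\|u^h\|_{H^\tau(\Omega)}\Big).
\]

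Finally I would kill the last term with the inverse inequality $\|u^h\|_{H^\tau(\Omega)}\le C\,q_X^{-(\tau-q-2)}\|u^h\|_{H^{q+2}(\Omega)}$, valid for all $u^h\in U_{\Phi,X}$ because $\Phi$ reproduces $H^\tau(\Omega)$; quasi-uniformity of $X$ (bounded mesh ratio $\rho_X=h/q_X$) turns $q_X^{-1}$ into $Ch^{-1}$, so the last term is at most $C(h_Y^{\tau-2-q}+h_Z^{\tau-2-q})h^{-(\tau-q-2)}\|u^h\|_{H^{q+2}(\Omega)}$, whose prefactor is $<\tfrac12$ by \eqref{denseness} once the constant there is matched to the accumulated constants. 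Absorbing this term into the left-hand side, rescaling, and squaring via $(a+b)^2\le 2a^2+2b^2$ gives \eqref{my stab est}. The hard part will be the boundary sampling inequality: it must be set up on the curved $(d-1)$-manifold $\partial\Omega$ through a finite atlas with uniformly controlled transition maps, it must correctly propagate the fractional orders $q+3/2$ and $\tau-1/2$ into the powers of $h_Z$, and it must keep all trace, extension, and embedding constants independent of the point sets $Y,Z$; a secondary point is verifying the inverse inequality in exactly the stated form and reconciling its dependence on $q_X$ with the $h_X$-phrased denseness condition \eqref{denseness} (quasi-uniformity is the bridge, and \eqref{denseness} also guarantees $h_Y,h_Z$ small enough for the sampling inequalities to apply).
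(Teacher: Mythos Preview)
Your proposal is correct and matches the approach the paper indicates. The paper does not reprove this proposition but quotes it as Lemma~3.3 of the prior work \cite{Cheung+LingETAL-leaskerncollmeth:18}, remarking only that its proof rests on sampling inequalities \cite{Arcangeli+SilanesETAL-ExtesampineqSobo:12} and Bernstein-type inverse inequalities for $U_{\Phi,X}$---exactly the three-step scheme (elliptic regularity, sampling on $\Omega$ and $\partial\Omega$, then absorption via the inverse inequality under the denseness condition \eqref{denseness}) that you reconstruct.
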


To prove the discrete stability estimate in \eqref{my stab est}, we require theoretical tools such as sampling inequalities \cite{Arcangeli+SilanesETAL-ExtesampineqSobo:12} and inverse inequalities. These inequalities, which differ from the conjecture \eqref{Xinvineq}, were proven in our prior work. We direct interested readers to the original articles for more details.
It is easy to verify that the collocation matrix in this context corresponds to the overdetermined system in the form of \eqref{VLS mtx sys LS} with unity quadrature weight. The trial function that minimizes \eqref{eq:H2 sol} has expansion coefficients ${\{ c_j \}}\subset \R^{N_X}$ that solve \eqref{VLS mtx sys LS} in the least-squares sense, and \eqref{VLS mtx sys} is the associated normal equation.

Now, we rewrite the  inequality \eqref{my stab est} as
\begin{equation}\nonumber %label{my stab est 2}
C^{-1} \| u^h \|_{H^{q+2}(\Omega)}^2 \leq
    h_Y^{-2q}  h_Y^d \| \mathcal{L} u^h \|_Y^2 + h_Z^{-2q} h_Z^{-3}h_Z^{d-1} \|u^h\|_Z^2 ,
\end{equation}
which matches the form of our targeted inequality in \eqref{eq stab ineq}. Note that \eqref{eq stab ineq} can be proven if, for any given trial function $u^h$, we can select sets $Y$ and $Z$ so that
\begin{equation}\label{what to prove}
   \text{$h_Y^d \| \mathcal{L}u^h \|_Y^2 \leq  \| \mathcal{L} u^h\|^2_{L^2(\Omega)}$,
   \qquad and \qquad
    $h_Z^{d-1} \|u^h\|_Z^2\leq \| u^h\|^2_{L^2(\partial\Omega)}$}
\end{equation}
hold, and at the same time, subject to a constraint on fill distances
$$h_Y, h_Z = \mathcal{O}( h_{})$$
to allow factoring the term $ h_{}^{2q}$ to the left-hand side. We  present two lemmas to prove \eqref{what to prove} before the formal proof of the stability inequality \eqref{eq stab ineq}. 
%\blue{The one (Lemma~\ref{lemY}) and another boundary version (Lemma~\ref{lemZ}) are derived by $h_P$, instead of $n_P$ in analogies in \cite[Theorem 7]{wenzel2024sharpinversestatementskernel}.}

\begin{lemma}\label{lemY}
 Let $\Omega \subset \mathbb{R}^d$ be a compact domain.
Let $v:\Omega \rightarrow \mathbb{R}$ be any Riemann square-integrable function. For any given $C>0$ and $h>0$, there exits a discrete set of points $P\subset \overline{\Omega}$ with fill distance $ {\frac12}Ch \leq h_P  \leq C h$ satisfying
   $h_P^d \| v \|_P^2 \leq  \|v\|^2_{L^2(\Omega)}$.
\end{lemma}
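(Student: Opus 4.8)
The plan is to realize $P$ as a translated regular grid clipped to $\overline\Omega$, choosing the translate by an averaging argument so that the discrete sum $h_P^{d}\|v\|_P^2$ does not overshoot $\|v\|^2_{L^2(\Omega)}$. Fix a grid spacing $\delta>0$, to be tuned at the end so that $\tfrac{\sqrt d}{2}\delta$ falls inside the window $[\tfrac12 Ch,Ch]$; extend $v$ by zero outside $\Omega$; and for each translation $t$ in the fundamental cell $[0,\delta)^d$ put $P_t:=(\delta\mathbb Z^d+t)\cap\overline\Omega$, so that $\|v\|_{P_t}^2=\sum_{p\in P_t}|v(p)|^2$ equals the lattice sum $\sum_{n\in\mathbb Z^d}|v(\delta n+t)|^2$ of the zero-extended $v$.

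The crux is the elementary averaging identity furnished by Tonelli's theorem on the nonnegative integrand — legitimate because Riemann square-integrability makes $|v|^2$ bounded and Lebesgue-integrable and $\partial\Omega$ is Lebesgue-null:
\[
\frac{1}{\delta^{d}}\int_{[0,\delta)^d}\Bigl(\delta^{d}\,\|v\|_{P_t}^2\Bigr)\,dt
\;=\;\int_{\mathbb R^d}\mathbf{1}_{\Omega}\,|v|^2
\;=\;\|v\|^2_{L^2(\Omega)} .
\]
So the average over translates of $\delta^{d}\|v\|_{P_t}^2$ is exactly $\|v\|^2_{L^2(\Omega)}$, and there is a translate $t^\star$ with $\delta^{d}\,\|v\|_{P_{t^\star}}^2\le\|v\|^2_{L^2(\Omega)}$; I would take $P:=P_{t^\star}$, a finite subset of $\overline\Omega$. (This also settles the degenerate case $\|v\|_{L^2(\Omega)}=0$ for free, forcing $\|v\|_P^2=0$.)

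It remains to control the fill distance $h_P$. Away from $\partial\Omega$ the clipped grid inherits the covering radius of $\delta\mathbb Z^d$, that is $h_P\le\tfrac{\sqrt d}{2}\delta$; since the dimensions of interest satisfy $d\le 3$ (as in Theorem~\ref{main thm}; $d\le 4$ would suffice, and for larger $d$ one replaces the cubic grid by a sufficiently thin covering lattice), $\tfrac{\sqrt d}{2}\le 1$, whence $h_P\le\delta$ and therefore
\[
h_P^{d}\,\|v\|_P^2 \;\le\; \delta^{d}\,\|v\|_P^2 \;\le\; \|v\|^2_{L^2(\Omega)} ,
\]
which is the claimed inequality. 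Tuning $\delta$ so that $\tfrac{\sqrt d}{2}\delta$ is the midpoint of $[\tfrac12 Ch,Ch]$ places $h_P$ in the window for all sufficiently small $h$, the lower bound $h_P\ge\tfrac12 Ch$ coming from a dual-grid point $\delta(\mathbb Z^d+\tfrac12)+t^\star$ lying in $\Omega$ at distance $\tfrac{\sqrt d}{2}\delta$ from $P$.

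The main obstacle I anticipate is exactly the boundary behaviour glossed over above: a point $\zeta\in\Omega$ within $\tfrac{\sqrt d}{2}\delta$ of $\partial\Omega$ can have its nearest grid point outside $\overline\Omega$, so $h_P\le\tfrac{\sqrt d}{2}\delta$ may fail there, and this is where the compactness and regularity of $\Omega$ must be used. Under an interior cone condition (or merely a Lipschitz boundary), for $\delta$ small relative to the cone radius one shows every such $\zeta$ still has a point of $\delta\mathbb Z^d+t^\star$ in $\overline\Omega$ at distance at most $\delta$; the surplus $(1-\tfrac{\sqrt d}{2})\delta$ available when $d\le 3$ is precisely the slack this correction consumes. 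Making that estimate quantitative — and thereby fixing how small $h$ must be for a prescribed $C$ — is the technical heart; everything else is the averaging identity. A grid-free variant gives the same conclusion: partition $\overline\Omega$ into pieces $\Omega_i$ of diameter $\le Ch$ and volume $\ge h_P^{d}$, pick $p_i\in\Omega_i$ with $|v(p_i)|^2\le|\Omega_i|^{-1}\int_{\Omega_i}|v|^2$ by the mean-value inequality for integrals, and sum over $i$; the required diameter-versus-volume balance of the pieces is again what the low dimension affords.
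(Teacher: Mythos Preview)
Your translation-averaging argument is a genuinely different route from the paper's. The paper partitions a bounding box $\mathcal R\supset\Omega$ into equal cubes of side $\delta=d^{-1/2}Ch$, selects in each cube a point where $\chi_\Omega v^2$ attains its infimum, and reads off $\delta^d\|v\|_Q^2=\underline R_{\mathcal P}(\chi_\Omega v^2)\le\int_\Omega v^2$ directly from the definition of the lower Riemann sum; it then restricts $Q$ to $\overline\Omega$ and bounds $h_P$ by a geometric analysis of extreme one-point-per-cube configurations. Your averaging over all translates reaches the same intermediate inequality $\delta^d\|v\|_{P_{t^\star}}^2\le\|v\|_{L^2}^2$ via Tonelli and the pigeonhole principle, which is analytically cleaner (only Lebesgue integrability of the zero-extension is used, no Riemann-sum machinery) at the cost of a non-constructive translate $t^\star$. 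Amusingly, the ``grid-free variant'' you sketch at the end---partitioning $\overline\Omega$ into pieces and picking a below-mean point in each---is essentially the paper's lower-Riemann-sum idea, just with the infimum relaxed to any sub-average value.

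Both routes must then convert the $\delta^d$ prefactor into $h_P^d$ with no multiplicative constant and place $h_P$ in $[\tfrac12 Ch,Ch]$, and both face the boundary effect you honestly flag. The paper's treatment of this step is itself heuristic: it asserts $h_P=h_Q$ after trimming to $\overline\Omega$ and argues about extremal configurations without explicitly addressing cubes that straddle $\partial\Omega$. Your identification of the interior cone condition as the place where the real work lies is, if anything, more careful than the published argument. The slack $(1-\tfrac{\sqrt d}{2})\delta$ you invoke for $d\le 3$ is plausible, though pinning the constant to exactly $1$ (rather than a constant depending on the cone aperture of $\Omega$) is not obviously automatic; this residual quantitative issue is shared by both proofs.
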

\begin{proof}
%\proof
We present a constructive proof by first picking a sufficiently large rectangular domain $\mathcal{R}:=[-l,l]^d\supset\Omega$ with some $l\in \delta\mathbb{N} :=  { d^{-1/2}Ch}\mathbb{N}$. Then, $\mathcal{R}$ can be partitioned by equally sized square subdomains with volume $\delta^d$; we call this partition $\mathcal{P}$. We pick an initial set of (possibly non-distinct) points $Q\subset\mathcal{R}$ that can achieve the lower Riemann sum $\underline{R}_{\mathcal{P}}$ on $\mathcal{P}$. That is,
\[
    \underline{R}_{\mathcal{P}}(\chi_\Omega v^2 ) = h_Q^d \| v \|_Q^2
    \leq \int_\mathcal{R} \chi_\Omega v^2\,dx
    = \int_\Omega v^2\,dx = \|v\|^2_{L^2(\Omega)},
\]
where $\chi_\Omega$ is the characteristic function for $\Omega$.
Note that the points in $Q$ may not be distinct or even in the domain; we define the set of distinct points $P=\text{unique}(Q\cap\overline{\Omega})$ by removing points outside of $\overline{\Omega}$ and duplicates in $Q$. By doing so, we must have $h_P=h_Q$ and $\|v\|_P^2 \leq \|v\|_Q^2$.

We conclude the proof by observing two critical scenarios. The first scenario arises when the largest possible fill distance of $P\subset \overline{\Omega}\subset \mathbb{R}^2$ ($\mathbb{R}^3$) occurs. This happens in a local region where two of the four (eight) points in a 4-square quadrant (8-cube cuboid) region are located in opposite corners. This is illustrated in Figure~\ref{fig:Lem1}. In general, we have ${(2h_P)}^2 \leq d(2\delta)^2$ for dimension $d$.
In the second scenario, the smallest fill distance, $ {d\delta^2\leq (2h_P)}^2 $, occurs when all data points are situated at the center of their respective squares (cubes). Therefore, it follows that  {$\frac{1}{4}C^2h^2 \leq h_P^2 \leq C^2h^2$} as required.
%We complete the proof by noting that the largest possible fill distance of $P\subset \overline{\Omega}\subset \mathbb{R}^2$ ($\mathbb{R}^3$) occurs when, in some local region, two of the four (eight) points in a 4-square quadrant (8-cube cuboid) region are located in the opposite corners, see Figure~\ref{fig:Lem1}, i.e., $(2h_P)^2 \leq d(2\delta)^2$ for dimension $d$. Similarly, the smallest fill distance is $(2h_P)^2 \leq d\delta^2$ when all data points sit at the center of their respective squares (cubes). That is, $\frac{1}{4}Ch \leq h_P \leq Ch$ as desired.
%\qed
\end{proof}

In the course of our study, it has come to our attention that similar themes and theoretical results have been explored in an unpublished work \cite{wenzel2024sharpinversestatementskernel}. Specifically, the norm equivalency there were built upon/for nested quasiuniform points generated by a geometrically greedy algorithm \cite{Marchi+SchabackETAL-Neardatapoinloca:05}.

\begin{figure}
  \centering
  \begin{overpic}
	[width=.3\hsize,clip=true,tics=10]{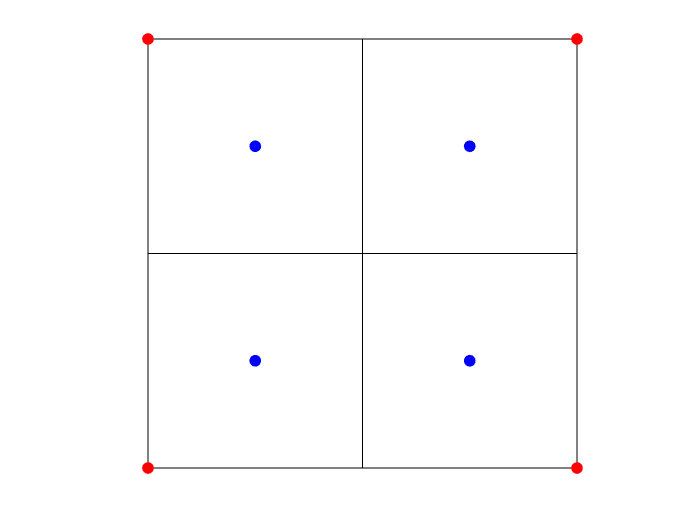}
    \put(85,52) {\scriptsize $\delta$}
    \put(52,38.8){\color{red}\vector(1,1){30}}
    \put(40,75) {\scriptsize \red{Largest possible $h_P$}}
    \put(52,38.5){\color{blue}\vector(1,-1){15}}
    \put(70,25) {\scriptsize \blue{Smallest}}
  \end{overpic}
  \includegraphics[width=.35\hsize]{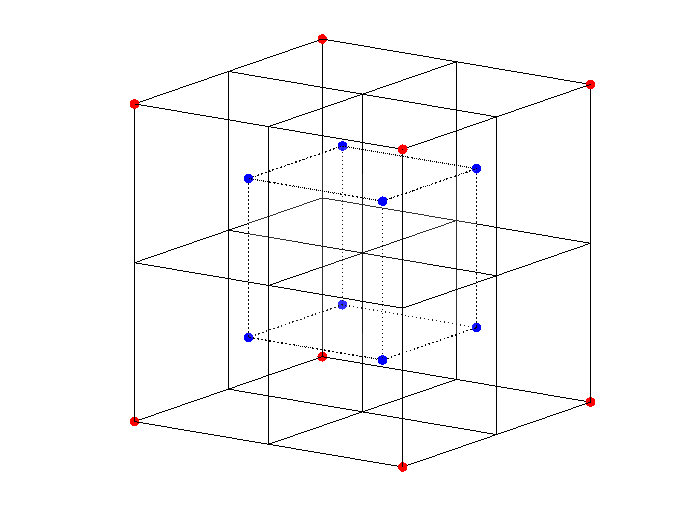}
  \caption{
   An illustration of the distribution of data points contributing to the lower Riemann sum based on a partition with equally sized subdoamins with edge length $\delta$. The red points showcase a distribution that maximizes the fill distance when some points are located at opposite corners of one quadrant/cuboid region.
The blue points show a distribution that minimizes the fill distance when all the points are in the centers of all quadrant/cuboid regions.
  }\label{fig:Lem1}
\end{figure}

\begin{lemma}\label{lemZ}
 Let $\partial\Omega \subset \mathbb{R}^d$ be a codimension-1, $C^1$-smooth, connected and compact manifold. Let $v:\partial\Omega \rightarrow \mathbb{R}$ be any Riemann square-integrable function.
Denote by $r_{\partial\Omega}>0$ a constant dependent on $\partial\Omega$ for which the constant-along-normal extension is well-defined in the narrow band domain of radius $r<r_{\partial\Omega}$.
For any given $C>0$ and $h>0$ sufficiently small with respect to $r_{\partial\Omega}$ and other generic constants, there exists a discrete set of points $P\subset\partial\Omega$ with fill distance ${\frac12}Ch \leq h_P  \leq C h$ satisfying
{$h_P^{d-1} \|v\|_P^2 \leq \|v\|^2_{L^2(\partial\Omega)}$}.
\end{lemma}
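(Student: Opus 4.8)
The plan is to mimic the constructive argument of Lemma~\ref{lemY}, but transported to the manifold $\partial\Omega$ via the constant-along-normal extension. The key observation is that on a codimension-1 $C^1$-manifold, a tubular neighbourhood of radius $r<r_{\partial\Omega}$ is diffeomorphic to $\partial\Omega\times(-r,r)$, and the volume element there factors (up to a Jacobian bounded above and below by constants close to $1$ as $r\to 0$) as the surface measure on $\partial\Omega$ times $ds$ in the normal direction. So if $\tilde v$ denotes the constant-along-normal extension of $v$ to the narrow band $B_r:=\{x : \operatorname{dist}(x,\partial\Omega)<r\}$, then $\|\tilde v\|_{L^2(B_r)}^2$ is comparable to $2r\,\|v\|_{L^2(\partial\Omega)}^2$; more precisely, choosing $r=r(h)$ proportional to $Ch$, we can arrange $\|\tilde v\|_{L^2(B_r)}^2 \le (1+o(1))\,2r\,\|v\|_{L^2(\partial\Omega)}^2$.

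First I would fix $C>0$ and $h>0$ and set the band radius $r$ to be a small constant multiple of $Ch$ — small enough that $r<r_{\partial\Omega}$ and that the tubular-neighbourhood Jacobian is within, say, a factor $2$ of unity. Next I would apply the \emph{ambient} construction of Lemma~\ref{lemY} to the bounded (Riemann-measurable) domain $B_r\subset\mathbb{R}^d$ and the function $\tilde v^2\chi_{B_r}$: this produces a finite point set $\widetilde P\subset\overline{B_r}$ with fill distance $h_{\widetilde P}$ of order $Ch$ and $h_{\widetilde P}^{\,d}\,\|\tilde v\|_{\widetilde P}^2 \le \|\tilde v\|_{L^2(B_r)}^2$. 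Then I would project each point of $\widetilde P$ onto $\partial\Omega$ along the normal (this map is well-defined and Lipschitz on $B_r$ for $r<r_{\partial\Omega}$) and let $P$ be the set of distinct projected points. Because $\tilde v$ is constant along normals, $\|\tilde v\|_{\widetilde P}^2$ equals the $\ell^2$-sum of $|v|^2$ over the (with-multiplicity) projected points, hence is $\ge \|v\|_P^2$; and the fill distance of $P$ in $\partial\Omega$ (measured in the ambient metric, or equivalently the geodesic metric up to constants) is controlled by $h_{\widetilde P}$ up to a factor depending only on the Lipschitz constant of the projection. Combining, $h_P^{\,d-1}\|v\|_P^2 \lesssim r^{-1} h_{\widetilde P}^{\,d}\|\tilde v\|_{\widetilde P}^2 \lesssim r^{-1}\|\tilde v\|_{L^2(B_r)}^2 \lesssim \|v\|_{L^2(\partial\Omega)}^2$, where I used $h_{\widetilde P}\asymp r\asymp Ch$ to convert the missing power of $h$ into the factor $r^{-1}$. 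Tracking the constants and shrinking the proportionality constant relating $r$ to $Ch$ if necessary lets one absorb all the "$\lesssim$" constants and land exactly at $h_P^{d-1}\|v\|_P^2\le\|v\|_{L^2(\partial\Omega)}^2$, with $\tfrac12 Ch\le h_P\le Ch$ coming, as in Lemma~\ref{lemY}, from the geometry of the cubic partition (now pushed through the bi-Lipschitz chart).

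The main obstacle is bookkeeping the geometric distortion: I need uniform two-sided bounds on (i) the Jacobian of the tubular-neighbourhood parametrisation, (ii) the Lipschitz constants of the normal projection $B_r\to\partial\Omega$ and of its "inverse" (the inclusion), and (iii) the resulting comparison between ambient fill distance in $B_r$ and surface fill distance in $\partial\Omega$ — all of which hold for $h$ (hence $r$) small relative to $r_{\partial\Omega}$ and the $C^1$-geometry of $\partial\Omega$, but must be assembled so that the final constant is genuinely $1$ rather than merely $\mathcal{O}(1)$. A secondary subtlety is that the point set $\widetilde P$ from Lemma~\ref{lemY} lies in $\overline{B_r}$, not on $\partial\Omega$, and projecting it may collapse several points to one; this only helps the inequality $\|v\|_P^2\le\|\tilde v\|_{\widetilde P}^2$ but one must check it does not inflate the fill distance beyond $Ch$ — which is automatic since projection is distance-non-increasing up to the Lipschitz factor, and the lower bound $\tfrac12 Ch\le h_P$ is arranged, exactly as in Lemma~\ref{lemY}, by taking the worst-case "opposite corners of a cell" configuration in the chart. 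Since the details of (i)--(iii) are standard facts about $C^1$ hypersurfaces and contribute nothing conceptually new, I would state them as a single lemma on the narrow-band geometry (or simply cite it) and keep the proof focused on the projection-and-Riemann-sum mechanism.
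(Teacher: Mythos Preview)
Your proposal is correct and follows essentially the same route as the paper: extend $v$ constant-along-normals to a narrow band $B_r$ of width $r\asymp Ch$, invoke the norm comparison $\|v\circ\cp\|_{L^2(B_r)}^2\lesssim r\,\|v\|_{L^2(\partial\Omega)}^2$, apply Lemma~\ref{lemY} in the ambient band to obtain a discrete set $Q\subset\overline{B_r}$, and then set $P=\text{unique}(\cp(Q))$; the paper makes exactly this argument, choosing the explicit radius $r=C^dh/(2^dC_{\partial\Omega})$ so that the constants collapse to~$1$, and citing \cite[Lemma~3.1]{Cheung+Ling-Kernembemethconv:18} for the band-to-surface norm bound you described as the Jacobian estimate. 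Your enumerated concerns (i)--(iii) about geometric distortion are precisely what the paper handles by this constant tuning together with the observation $h_P\leq h_Q$ (projection/closest-point does not enlarge fill distance) and Remark~1 on the equivalence of Euclidean and geodesic fill distances.
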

%\proof
\begin{proof}
We prove this boundary version by embedding and application of Lemma~\ref{lemY}.
First, we pick a constant $r<r_{\partial\Omega}$ so that the Euclidean closest point function
\[
    \cp(x) := \argmin_{y\in\partial\Omega} |x-y|  \quad \text{for $x\in B(\partial\Omega)$}
\]
is $C^0$-continuous  in the $r$-narrow band domain:
\[
    B(\partial\Omega):= \big\{ x\in\R^d:\, \|x-y\|_2\leq r < r_{\partial\Omega}, \quad \text{for some $y\in\partial\Omega$} \big\}.
\]
This is a relaxed version of the $\cp$-Calculus in \cite{Maerz+Macdonald-CALCSURFWITHGENE:12} as we do not require any differentiability in this proof.
Using \cite[Lemma 3.1]{Cheung+Ling-Kernembemethconv:18} to relate the norms of surface functions and their constant-along-normal extensions defined by $x := y + t\vec{n}(y): \partial\Omega \to \Gamma_t$ for $y \in \partial\Omega$ and $t \in [0, r]$, there exists a constant $C_{\partial\Omega}$ that depends only on $\partial\Omega$ such that
\begin{eqnarray}\label{ecp1}
    \|v\circ\cp\|^2_{L^2(B(\partial\Omega))}
    :=\int_{-r}^{r}\|v\circ\cp\|^2_{L^2 (\Gamma_t  )}dt
    \leq C_{\partial\Omega} r \|v\|^2_{L^2( \partial \Omega) }
\end{eqnarray}
holds for all $v\in L^2( \partial \Omega)$.

For any sufficiently small $h$, let us focus on some rectangular region $\mathcal{R}$ containing the band $B(\partial\Omega)$. We partition $\mathcal{R}$ as in the previous proof with $r=C^dh/({2}^d C_{\partial\Omega})< r_{\partial\Omega}$.
Now, we apply Lemma~\ref{lemY} with the given constants $C,h>0$ to the constant-along-normal extension $v\circ\cp :B(\partial\Omega) \to \R$  of any given $v\in L^2( \partial \Omega)$. By  \eqref{ecp1}, this gives us an initial set of distinct points  $Q\subset B(\partial\Omega)$ so that
\begin{eqnarray}
      h_Q^d \| v\circ\cp  \|_Q^2
     &\leq&  \|v\circ\cp\|^2_{L^2(B(\partial\Omega))}
     \nonumber\\
     &\leq&  C_{\partial\Omega} r \|v\|^2_{L^2( \partial \Omega) }
     \nonumber\\
     &=&  \frac{C^d}{{2}^d} h \|v\|^2_{L^2( \partial \Omega) }.
     \label{eq:vecp}
\end{eqnarray}
%\CMout{From  Lemma \ref{lemY}, we know that {the} range of possible fill distance is ${\frac12}Ch\leq h_Q\leq  Ch$.  The above inequality combined with  \eqref{ecp1} yields}
%\begin{equation*}      \label{Qineq0}
%   \CMout{    \|v\|^2_{L^2( \partial \Omega) }
%       \geq
%       \frac1{h}\Big({\frac2C} h_Q\Big)^{d} \| v\|_Q^2
%       \geq h^{d-1}\| v\|_Q^2.}
%\end{equation*}
We define the desired boundary point set as $P=\text{unique}(\cp(Q))\subset\partial\Omega$. Its fill distance $h_P$ is defined similarly to \eqref{filldist}, but the supremum is taken over a smaller set, $\partial \Omega$. Consequently, we must have $h_P\leq h_Q$.
From the previous proof, we understand that the minimum fill distance occurs only for certain point distributions. Any alteration in the points will result in a change from this minimum fill distance.
Based on these observations, we conclude that the fill distance $h_P$ can only assume values within the range of $h_Q$. In particular, from  Lemma \ref{lemY}, we know that {the} range of possible fill distance is ${\frac12}Ch\leq h_P\leq h_Q\leq  Ch$. Continuing to work on \eqref{eq:vecp} yields the following results:
\begin{eqnarray}      \label{Qineq}
	\|v\|^2_{L^2( \partial \Omega) }&\geq&
		\frac1{h}\Big({\frac2C} h_P\Big)^{d} \| {v\circ\cp} \|_Q^2\\\nonumber
		& \geq&
h^{d-1}	\| {v\circ\cp} \|_P^2\\\nonumber
&=&h^{d-1}	 \| v \|_P^2,
\end{eqnarray}
and the proof is completed. 
\end{proof}%\qed

\subsubsection*{Remark 1}
In the proof in Lemma \ref{lemZ}, the fill distance $h_{P}$ is implicitly characterized by Euclidean distances. For sufficiently small $ h_{}<r_{\partial\Omega}$, then we can pick a narrow band domain to have radius $r< h_{}$. From the proof of our earlier work \cite[Lemma 3.2]{Chen+Ling-Extrmeshcollmeth:20}, we know the two fill distances are equivalent, i.e.,
$   C_1 h_P  \leq h_{P,\partial\Omega} \leq C_2 h_P $
holds for some $C_1,C_2$ depending on the diameter of $\partial\Omega$. Using $h_{P,\partial\Omega}$ in Lemma \ref{lemZ} only influences the generic constants in the proof, as the geodesic fill distance $h_{P,\partial\Omega}$ is proportional to $h_P$. \qed

Through Lemmas \ref{lemY} and \ref{lemZ}, we have confirmed the existence of sets $Y$ and $Z$ that satisfy the conditions of \eqref{what to prove} and the fill distance constraints $h_Y, h_Z = \mathcal{O}( h_{})$. With these conditions met, we can now apply these findings to establish the stability inequality \eqref{eq stab ineq} of the main theorem.

\begin{theorem}\label{thm stab}
Under the assumptions of Theorem \ref{main thm}, the stability inequality in \eqref{eq stab ineq}
  \[
    C^{-1}h^{2q}\| u^h\|^2_{H^{q+2}(\Omega)} \leq
  \| \mathcal{L} u^h\|^2_{L^2(\Omega)}
   + h^{-3} \| u^h\|^2_{L^2(\partial\Omega)},
   \quad 0\leq q \leq \tau-2,
  \]
  holds  for all trial functions $u^h\in U_{\Phi,X}\subset H^\tau(\Omega)$ {that are Riemann square-integrable} with some constant $C$ that only depends on $q$, $\Omega$, $\Phi$ and $\mathcal{L}$.
\end{theorem}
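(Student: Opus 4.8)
The plan is to derive \eqref{eq stab ineq} from the discrete stability estimate \eqref{my stab est} of Proposition~\ref{my stability thm} by feeding it point sets $Y$ and $Z$ produced by Lemmas~\ref{lemY} and~\ref{lemZ}. Concretely, fix a trial function $u^h\in U_{\Phi,X}$ (which is Riemann square-integrable by hypothesis, and so is $\mathcal{L}u^h$ since $\mathcal{L}$ has smooth bounded coefficients and $u^h$ is smooth). Apply Lemma~\ref{lemY} to the function $v=\mathcal{L}u^h$ on $\Omega$ with the constant $h$ from the target inequality and some fixed $C_Y>0$: this yields a set $Y\subset\overline{\Omega}$ with $\tfrac12 C_Y h\le h_Y\le C_Y h$ and $h_Y^d\|\mathcal{L}u^h\|_Y^2\le\|\mathcal{L}u^h\|_{L^2(\Omega)}^2$. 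Likewise apply Lemma~\ref{lemZ} to $v=u^h|_{\partial\Omega}$ with the same $h$ and a fixed $C_Z>0$, getting $Z\subset\partial\Omega$ with $\tfrac12 C_Z h\le h_Z\le C_Z h$ and $h_Z^{d-1}\|u^h\|_Z^2\le\|u^h\|_{L^2(\partial\Omega)}^2$. These are exactly the two inequalities collected in \eqref{what to prove}.

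Next I would check the denseness hypothesis \eqref{denseness} of Proposition~\ref{my stability thm}. Since $h_Y,h_Z\le C_Y h,\,C_Z h = \mathcal{O}(h)$, the left side of \eqref{denseness} is bounded by $C(\,(C_Y h)^{\tau-q-2}+(C_Z h)^{\tau-q-2}\,)h^{-\tau+q+2}=C'(C_Y^{\tau-q-2}+C_Z^{\tau-q-2})$, a constant independent of $h$. This is where the freedom in choosing $C_Y,C_Z$ is used: pick them small enough that this constant is below $\tfrac12$. (One must also confirm that $h$ small relative to $r_{\partial\Omega}$, as required by Lemma~\ref{lemZ}, is compatible; since the target estimate is an asymptotic statement as $h\to 0$ this is harmless, and for the finitely many larger $h$ the constant $C$ can be inflated.) With \eqref{denseness} verified, Proposition~\ref{my stability thm} gives
\[
\| u^h \|_{H^{q+2}(\Omega)}^2 \le C\left( h_Y^{d-2q}\|\mathcal{L}u^h\|_Y^2 + h_Z^{d-4-2q}\|u^h\|_Z^2 \right).
\]

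Finally I would massage the right-hand side into the form of \eqref{eq stab ineq}, following the rewriting already indicated in the text: write $h_Y^{d-2q}=h_Y^{-2q}h_Y^d$ and $h_Z^{d-4-2q}=h_Z^{-2q}h_Z^{-3}h_Z^{d-1}$, then substitute the two inequalities from \eqref{what to prove} to get
\[
\|u^h\|_{H^{q+2}(\Omega)}^2 \le C\left( h_Y^{-2q}\|\mathcal{L}u^h\|_{L^2(\Omega)}^2 + h_Z^{-2q}h_Z^{-3}\|u^h\|_{L^2(\partial\Omega)}^2 \right).
\]
Now use the two-sided fill-distance bounds: $h_Y^{-2q}\le (\tfrac12 C_Y)^{-2q}h^{-2q}$ and $h_Z^{-2q}h_Z^{-3}\le (\tfrac12 C_Z)^{-2q-3}h^{-2q-3}$ (recall $q\ge 0$), so that both terms pick up the factor $h^{-2q}$ (the boundary term additionally carrying $h^{-3}$). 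Multiplying through by $h^{2q}$ and absorbing all the $C_Y,C_Z$-dependent constants into a single generic $C$ depending only on $q,\Omega,\Phi,\mathcal{L}$ yields exactly \eqref{eq stab ineq}. The main obstacle is the middle step: one must be sure that the point sets handed back by Lemmas~\ref{lemY}--\ref{lemZ} can \emph{simultaneously} be made dense enough for \eqref{denseness} while still obeying $h_Y,h_Z=\mathcal{O}(h)$ — this is precisely what the two-sided control $\tfrac12 Ch\le h_P\le Ch$ in those lemmas buys us, and the argument hinges on that lower bound not being vacuous, so I would double-check the constants there and the compatibility of the "$h$ sufficiently small" caveat in Lemma~\ref{lemZ} with an asymptotic reading of the theorem.
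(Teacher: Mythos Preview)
Your proposal is correct and follows essentially the same route as the paper's proof: apply Lemmas~\ref{lemY} and~\ref{lemZ} with the free constant chosen small enough (the paper takes the single explicit value $C=(5C_{\ref{denseness}})^{-1/(\tau-q-2)}$) so that the resulting $Y,Z$ satisfy the denseness requirement~\eqref{denseness}, then invoke Proposition~\ref{my stability thm} and use the two inequalities in~\eqref{what to prove} together with $h_Y,h_Z=\mathcal{O}(h)$ to pass to the continuous $L^2$-norms. Your extra care about the two-sided fill-distance control and the ``$h$ sufficiently small'' caveat in Lemma~\ref{lemZ} is exactly the point the paper handles implicitly.
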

\begin{proof}
%\proof
We denote $C_{\ref{denseness}}$ as the generic constant in \eqref{denseness}, with its exact value being non-essential for this proof. We apply Lemma~\ref{lemY} and Lemma~\ref{lemZ} with
{$C=(5C_{\ref{denseness}})^{-1/(\tau-q-2)}$}. This enables us to select sets of collocation points $Y\subset\overline{\Omega}$ and $Z\subset\partial\Omega$ that satisfy the denseness requirement \eqref{denseness}. Consequently, this ensures that the discrete stability estimate \eqref{my stab est} holds. These sets $Y$ and $Z$ also meet the condition that $h_Y=\mathcal{O}( h_{}) = h_Z$, which can be used to provide lower estimate to $L^2$-norms in \eqref{what to prove}. Thus, the proof is complete.
\end{proof}%\qed
{Note that we simply adopt the theory for LSC method to prove Theorem~\ref{thm stab}, which does not contain any denseness requirement as there are no collocation or quadrature points in the statement.
}

\section{Implications to least-squares collocation methods}\label{sec WLS}

Before we move on, let's clarify some potential confusion around the different ``weights'' used in our context so far. The  ``quadrature weights'' are the coefficients applied to function evaluations at quadrature points to approximate integrals. In our prior work, we also called the   LSC solutions in \eqref{eq:H2 sol} ``weighted least-squares'' solutions. In this context, ``weighted'' refers  to the fill distances-related boundary scaling factor $\vartheta = (h_Y^{-d+2q} h_Z^{{d-4-2q}})^{1/2}$ inside the objective function, which serves to balance the scale differences between the differential operator and boundary conditions. Finally, the ``weight matrix'' in the WLS solutions {\eqref{eq:WLS}, which is defined later in page~\pageref{eq:WLS},} refers to the matrix that is multiplied with the overdetermined system. This system already includes the theoretical scaling factor. This multiplication is done prior to solving the system using a least-squares approach.

%least-squares variational solution \eqref{eq:ls var sol}
%    - convergence estimates available from $L^2$ to $H^\tau$.
%    - integrations are needed; convergence proof do not take quadrature error into consideration
%    - quadrature weight naturally define a data point-dependent weight matrix for the weighted least-squares in \eqref{VLS mtx sys LS}, which is equivalent to the resultant system \eqref{VLS mtx sys}
%    - there is no restriction on quadrature nodes, besides accuracy concern
%v.s.
%least-squares collocation solutions \eqref{eq:H2 sol}
%    - convergence estimates available from $H^{\lceil d/2 \rceil}$ to $H^\tau$, i.e., in dimension $d\leq3$, as in Theorem~\ref{main thm},  this is $H^2$-convergence.
%    - no integration, hence no quadrature error
%    - theory do not allow the use of weight matrix for the weighted least-squares in \eqref{VLS mtx sys LS}, which seems problematic when using collocation points with large mesh ratio (i.e., less uniform intuitively).
%    - collocation points must fulfil denseness requirement \eqref{denseness}

We will now summarize by comparing the VLS solutions \eqref{eq:ls var sol} and the LSC solutions \eqref{eq:H2 sol}. This comparison reveals that each approach has its pros and cons:
\begin{itemize}
  \item The VLS solutions provide  broader convergence estimates, ranging from $L^2$ to $H^\tau$. In contrast, the LSC solutions  offer  estimates from $H^{\lceil d/2 \rceil}$ to $H^\tau$, which translate to $H^2$-convergence in dimensions $d\leq3$, c.f. assumption in Theorem~\ref{main thm}.  We observe similar numerical performance as reported in \cite{Cheung+LingETAL-leaskerncollmeth:18}, where the $L^2$ errors show two additional orders of convergence compared to the $H^2$ errors.
%\red{\emph{Comment: Page 9, lines 308-327. For the first bullet point, is this difference only theoretical? Do the authors observe convergence rates for $H^t, t<d/2$ in the collocation solutions? Adding a discussion would be more clear. Bullet point 4 is not clear to me in its current form.}}
  \item The VLS solutions require   integrations, and its convergence proofs do not account for quadrature error. The exponentially decaying local Lagrange functions \cite{Fuselier+HangelbroekETAL-Locabasekernspac:13} (applicable to  the Mat\'{e}rn kernel and more) are efficient tools for evaluating boundary integrals.
      Conversely, the LSC solutions  do  not involve integration and are thus free from quadrature error.
  \item The {approximation to} VLS  solutions imposes  no restrictions on quadrature nodes
  {except for accuracy concern.}
  However, the collocation points in the LSC solutions must satisfy the trial center-dependent denseness requirement \eqref{denseness}. Locally refining collocation points, without an appropriate weight matrix, often does not improve the accuracy of collocation solutions.
  \item In terms of the weight matrix {$W$} in approximating \eqref{VLS mtx sys LS}, the VLS  solutions naturally employ  quadrature weights to define a data point-dependent matrix for the WLS.
      %Note that the weighted least-squares sets $W=I$ and depends on some boundary weight $\vartheta$ in \eqref{VLS mtx sys LS}.
      The theory in \cite{{Cheung+LingETAL-leaskerncollmeth:18}} regarding  the LSC solutions does not permit  the additional term $W$, which could pose accuracy issues when dealing with collocation points with a large mesh ratio.
      {In the second half of this paper, we prove that this concern is unfounded.}
\end{itemize}

Our next goal is to establish another weighted-discrete stability estimate by finding an additional upper bound for the proven stability inequality in \eqref{eq stab ineq}. This would lead to a numerical scheme where the associated numerical solution is defined by the weighted least-squares formulation in \eqref{VLS mtx sys LS}. Ideally, the conditions on the weight matrix could accommodate easy-to-compute alternatives beyond  merely quadrature weights. This is particularly relevant given the difficulty of determining quadrature weights on irregular domains in higher dimensions. However, the challenge lies in ensuring that any fixed discrete overestimate to an integral holds for all possible trial functions.

\subsection{A uniform bound for $L^2(\Omega)$-norm in the trial space} %
Here is a quick recap of what is discussed in \cite[Theorem 15]{Santin+Schaback-Appreigekernspac:16} for continuous and symmetric  positive definite kernel $K$  on a compact set $\Omega$.
Mercer's theorem ensures the existence of eigenvalues
$\lambda_1\geq\lambda_2\geq\cdots>0$ with a decay rate
\[
   \sqrt{ \lambda_{j+1}  } < C j^{-\tau/d},
\]
{where $C$ is only dependent on $K$  and $\Omega$, }and the eigenfunctions $\phi_j$ that are orthonormal in $L^2(\Omega)$ and orthogonal in its native space $\mathcal{N}(\Omega)$
with $\|\phi_j\|_{\mathcal{N}(\Omega)} =\lambda_j^{-1}$
{for any $j$ \cite[Theorem 2]{Santin+Schaback-Appreigekernspac:16}} such that
\begin{equation}\label{mercer}
  K(x,y) = \sum_{j=1}^\infty \lambda_j \phi_j(x)\phi_j(y)
  \qquad x,y\in\Omega.
\end{equation}

Let $u \in U_{K,X}$ be defined as in \eqref{eq:Ux} with $K$ in place of kernel $\Phi$. By definition of $U_{K,X}$, express $u$ as a linear combination of the kernel $K(\cdot, x_i)$  centered at $x_i\in X$, i.e.,
$$
    u(x) = \sum_{i=1}^{N_X} a_i K(x, x_i)
    %\red{= \sum_{i=1}^{N_X} a_i \sum_{j=1}^\infty \lambda_j \phi_j(x)\phi_j(x_i) }
$$
for some coefficients $a_i$. We introduce  coefficients
$\displaystyle b_j = \sum_{i=1}^{N_X} a_i \phi_j(x_i)$ to simplify this expression for $u$:
$$u(x) = \sum_{j=1}^\infty b_j {\lambda_j }\phi_j(x).$$
Next, we compute the $L^2$-norm square of $u$. Using the orthonormality of the $\phi_j$, this is given as
\begin{equation}\label{L2 int}
\|u\|_{L^2(\Omega)}^2=
\int_\Omega |u(x)|^2\,dx = \sum_{j=1}^\infty   b_j^2 \lambda_j^2
= {b}^T \underbrace{\big[\text{diag}( {\lambda_1 }^2, {\lambda_2 }^2,\cdots)\big]}_{=: D_\lambda} {b},
\end{equation}
where ${b} = [b_1, b_2, \ldots]^T$ is the $u$-dependent coefficient vector with respect to eigenfunctions.

On the quadrature side, for nodes in $y_k\in Y$ with positive quadrature weights in $\omega_k \in {W}$ for $k=1,\ldots,N_Y$, we expand the nodal values of $u$ in the quadrature formula using Mercer's expansion as follows
\begin{align}
\sum_{k=1}^{N_Y} \omega_k |u(y_k)|^2
&= \sum_{k=1}^{N_Y} \omega_k \left(\sum_{j=1}^\infty b_j  {\lambda_j} \phi_j(y_k)\right)^2 \nonumber\\
&=\sum\limits_{j=1}^\infty \sum\limits_{l=1}^\infty b_j b_l
\left( \sum\limits_{k=1}^{N_Y} \omega_k  {\lambda_j}\phi_j(y_k) {\lambda_l}\phi_l(y_k) \right)  = {b}^T {G} {b}.\label{quad}
\end{align}
The $jl$-entries of the matrix ${G}$ were defined by:
\begin{equation}\label{W_ij}
[{G}]_{jl} =
     \sum\limits_{k=1}^{N_Y}\omega_k  {\lambda_j }\phi_j(y_k)  {\lambda_l }\phi_l(y_k),
\end{equation}
which is a Gram matrix and must be semi-positive definite.

{
The following theorem is crucial for understanding the role of norm equivalency in implementing the necessary numerical integration to find $u^h_{VLS}$. It leads to a conclusion that high-order quadrature rules are not essential to match the convergence order of the kernel method.
}

\begin{theorem}\label{thm oe}
Let $K$ be a continuous and positive definite kernel on a compact set $\Omega$. Define a finite-dimensional trial space $U_{K,X}$ of $K$ on some set of trial centers  $X\subset\Omega$.
Then, for any sufficiently dense set, with respect to $K$ and $\Omega$ but not $X$,  of quasi-uniform points $P = \{p_1,\ldots,p_{N}\}\subset\Omega$, there exists
a set of zeroth-order quadrature weights, $\{\widetilde{\omega}_1,\ldots,\widetilde{\omega}_{N_P}\}\in \widetilde{{W}}$, that are not connected to any convergent quadrature rule but still can be used to under- and overestimate the $L^2(\Omega)$-norm of all trial functions. That is, the following inequality holds:
\[
    \frac14 \| u \|_{P,\widetilde{{W}}}^2 \leq
    \|u\|_{L^2(\Omega)}^2
    \leq 4 \| u \|_{P,\widetilde{{W}}}^2
    := 4 \sum_{k=1}^{N} \widetilde{\omega}_k |u(p_k)|^2
    \text{\qquad for all $u \in U_{K,X}$.}
\]
\end{theorem}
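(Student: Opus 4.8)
The plan is to exploit the two quadratic-form identities already set up in the excerpt: $\|u\|_{L^2(\Omega)}^2 = b^T D_\lambda b$ from \eqref{L2 int}, and $\sum_k \omega_k |u(p_k)|^2 = b^T G b$ from \eqref{quad}, where $b$ is the (infinite) coefficient vector of $u$ with respect to the Mercer eigenfunctions $\phi_j$, and $D_\lambda = \mathrm{diag}(\lambda_1^2,\lambda_2^2,\ldots)$. The key structural observation is that, because $u$ ranges over the \emph{finite}-dimensional space $U_{K,X}$, the vector $b$ is not arbitrary: writing $u = \sum_i a_i K(\cdot,x_i)$ and $b_j = \sum_i a_i \phi_j(x_i)$, the map $a \mapsto b$ has rank at most $N_X$, so $b$ lies in an $N_X$-dimensional subspace $V \subset \ell^2$. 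On this subspace the tails $\sum_{j > M} b_j^2 \lambda_j^2$ can be controlled: using the eigenvalue decay $\sqrt{\lambda_{j+1}} < C j^{-\tau/d}$ together with a bound on $\|a\|$ (equivalently, control of the native-space norm of $u$ in terms of its $L^2$ norm on the finite-dimensional space, where all norms are equivalent), one shows that for a truncation level $M$ chosen large enough (depending on $K$, $\Omega$, and $N_X$ — this is where the ``sufficiently dense, with respect to $K$ and $\Omega$ but not $X$'' hypothesis will actually need care), the truncated sum $\sum_{j\le M} b_j^2\lambda_j^2$ captures, say, at least $\tfrac{9}{10}$ of $\|u\|_{L^2(\Omega)}^2$ for all $u \in U_{K,X}$ simultaneously.

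Next I would choose the quadrature weights $\widetilde\omega_k$ and the point set $P$ so that the Gram matrix $G$ is close, in the relevant sense, to $D_\lambda$ restricted to the first $M$ coordinates. Concretely: the exact eigenfunction orthonormality $\int_\Omega \phi_j\phi_l\,dx = \delta_{jl}$ means that a convergent quadrature rule would make $\sum_k \widetilde\omega_k \lambda_j\phi_j(p_k)\lambda_l\phi_l(p_k) \to \lambda_j^2\delta_{jl}$; but we do not want to invoke convergence of a quadrature rule. Instead, for $P$ quasi-uniform and sufficiently dense, a sampling-inequality / least-squares argument produces weights $\widetilde\omega_k > 0$ such that the $M\times M$ leading block of $G$ satisfies a two-sided bound like $\tfrac12 D_\lambda^{(M)} \preceq G^{(M)} \preceq 2 D_\lambda^{(M)}$ (say), and such that the contribution of the high-frequency tail to $b^T G b$ is again small relative to $\|u\|_{L^2(\Omega)}^2$ — here one uses that on the finite-dimensional trial space the high eigen-modes carry little mass of $u$, plus a crude bound $\|\phi_j\|_{L^\infty}$-type estimate (or a Bernstein/inverse estimate) to keep $\sum_{j>M} b_j^2(\cdots)$ under control. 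Combining the leading-block two-sided bound with the two tail estimates, and absorbing all the constants $\tfrac9{10}$, $\tfrac12$, $2$, etc. into the stated factors $\tfrac14$ and $4$, yields the claimed sandwich $\tfrac14\|u\|_{P,\widetilde W}^2 \le \|u\|_{L^2(\Omega)}^2 \le 4\|u\|_{P,\widetilde W}^2$ for all $u \in U_{K,X}$.

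The main obstacle is the uniformity over the infinite-dimensional frequency range while keeping the density requirement independent of $X$. One must pick the truncation level $M$ before choosing $P$, and $M$ must be large enough that the Mercer tails are negligible for \emph{every} trial function in $U_{K,X}$; this forces a bound of the form ``native-space norm of $u$ $\le$ (constant depending on $X$) $\times$ $L^2$-norm of $u$'', which a priori does depend on $X$ through an inverse inequality. The resolution — and the delicate point of the proof — is that one does not need $M$ independent of $X$; rather one needs the \emph{density} of $P$ (i.e. its fill distance) to be independent of $X$, and $M$ is allowed to grow with $X$ while the quadrature construction on $P$ still delivers the leading-block bound because $P$ is chosen dense enough relative to the decay of $K$ alone. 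I would make this rigorous by first fixing the geometry ($\Omega$, $K$, hence the $\lambda_j$ and a modulus of continuity / sampling constant for spans of $\{\phi_1,\ldots,\phi_M\}$), then reading off the required fill distance $h_P$ as a function of $M$, then observing that for the given $X$ one may take $M = M(X)$ and still have $h_P$ bounded by a quantity depending only on $K$ and $\Omega$ through the ``sufficiently dense'' threshold — exactly the phrasing in the statement. The remaining steps (the sampling inequality giving the Gram-block bound, the tail estimates, the constant bookkeeping) are routine given the tools already cited in the paper.
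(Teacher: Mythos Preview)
Your argument has a circularity at precisely the point you flag as ``the delicate point'', and it is not resolved. You truncate the Mercer series at level $M$ and control the tail via an inverse inequality on $U_{K,X}$; this forces $M$ to depend on $X$. You then need the $M\times M$ leading block of $G$ to satisfy $\tfrac12 D_\lambda^{(M)} \preceq G^{(M)} \preceq 2 D_\lambda^{(M)}$, and claim the required fill distance $h_P$ can be bounded independently of $M$ (hence of $X$) ``through the decay of $K$ alone''. But no mechanism is supplied: the target diagonal entries $\lambda_j^2$ decay to zero, so matching them to fixed \emph{relative} accuracy is an $M$-dependent condition on $P$ unless you give a reason why the quadrature error on $\lambda_j\phi_j\cdot\lambda_l\phi_l$ decays at least as fast as $\lambda_j\lambda_l$. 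As written, the density threshold on $P$ inherits the $X$-dependence through $M(X)$, contradicting the theorem's hypothesis.

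The paper's proof bypasses truncation entirely by providing exactly the missing uniformity: since $\|\lambda_j\phi_j\|_{\mathcal{N}(\Omega)} \le 1$ for every $j$, the integrands in \eqref{W_ij} lie in a fixed smoothness class independent of $j,l$, so a single convergent quadrature rule $W$ on a set $P$ whose density depends only on $K$ and $\Omega$ makes the full Gram matrix $G$ spectrally close to $D_\lambda$ (each eigenvalue within a factor of $2$). The zeroth-order weights $\widetilde W$ are then produced by a second, small perturbation of $W$, widening the eigenvalue factor from $2$ to $4$. Your route could likely be repaired by inserting this uniform native-space bound in place of the truncation-plus-inverse-inequality manoeuvre, but as it stands the $X$-independence is asserted rather than proved, and the two-step perturbation (convergent quadrature $\to$ nearby zeroth-order weights) that actually delivers $\widetilde W$ is absent from your outline.
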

\begin{proof}
%\proof
This inequality can be reformulated in terms of coefficients $b$ using equations \eqref{L2 int} and \eqref{quad} as
\[
    \frac14 {b}^T \widetilde{G} {b} \leq  b^T D_\lambda b \leq 4 {b}^T \widetilde{G} {b}
    \text{\qquad for all $b$,}
\]
{where $\widetilde{G}$ is the Gram matrix associated with some yet-to-be-determined zeroth-order quadrature weights $\widetilde{{W}}$.}
%\red{\emph{Comment: Page 10,11, Theorem 4.1. The proof looks intuitive. ''Thus, we can select a large enough $N_P$ such that W is close to $D_\lambda$. Here, closeness is defined such that the j-th eigenvalue of $W, \lambda_j (W)$, lies within the interval $[0.5 \lambda_j (D_\lambda ), 2\lambda_j (D_\lambda)]$" Can such closeness be achievable? This seems unclear from the current argument (simple bound leads to additive errors $D_\lambda = W + \epsilon$, but here, the author used the multiplicative error bound $0.5W \leq D_\lambda \leq 2W$, which is stronger and needs more justifications) }}
We first note that each element ${G}_{jl}$ of {the} matrix ${G}$ is obtained by summing the products of quadrature weights $\omega_k$ and the corresponding function values, over all quadrature points $p_k \in P$. This sum forms an approximation to the integral over $\Omega$.
Given the orthogonality of eigenfunctions, the matrix ${G}$ tends towards $D_\lambda$ as $h_P\to 0$ (or $N_P\to \infty$, provided $P$ is quasi-uniformly refined, i.e., $h_P \leq C N_P^{-1/d}$), and ${W}$ represents any quadrature weights linked to a convergent quadrature rule.
This limit argument is independent of $X$.
The class of integrands under current consideration that are smooth, decay with respect to indices $j,l$, and have a bounded $H^\tau(\Omega)$-norm, i.e., $\|\lambda_j\phi_j\|_{\mathcal{N}(\Omega)}  \leq 1$.
Thus, we can select a large enough $N_P$ such that ${G}$ is close to $D_\lambda$. Here, closeness is defined such that the $j$-th eigenvalue of ${G}$, $\lambda_j({G})$, lies within the interval $[\frac12\lambda_j(D_\lambda),2\lambda_j(D_\lambda)]$.

We now perform a second perturbation of ${G}$ by altering quadrature weights ${W}$ to some nearby values $\widetilde{{W}}$, resulting in a new {Gram matrix $\widetilde{{G}}$}. We keep the perturbation small so that this new matrix has the property that its $j$-th eigenvalue, denoted as $\lambda_j(\widetilde{{G}})$, lies within $[\frac12\lambda_j({G}),2\lambda_j({G})]$ or $[\frac14\lambda_j(D_\lambda),4\lambda_j(D_\lambda)]$.
%This shows
{By considering any basis formed by the eigenvectors of $\widetilde{{G}}$, we show that
\[
     \frac{1}{4}{b^TD_\lambda b} \leq {b^T\widetilde{G} b} \leq 4 {b^TD_\lambda b}
    \text{\qquad for all $b$.}
\]
Given the symmetry of this inequality, one can reverse the roles of $b^TD_\lambda b$ and $b^T\widetilde{G} b$ and} our claim is proven.
%\qed
\end{proof}

\subsubsection*{Remark 2}
{The above analysis is applicable to a wider class of {symmetric} positive definite kernels. However, to align with the convergence proof in other sections, we primarily focus on kernels that reproduce Sobolev spaces, such as the Matérn kernels}
%All discussions in this section hold for Mat\'{e}rn kernels
$\Phi$   of smoothness order $\tau > d/2$, whose native space is $H^\tau(\Omega)$. Moreover, it is also applicable to the restricted Matérn kernels $\Phi_{|_{\partial\Omega \times \partial\Omega}}$ on the boundary \cite{Fuselier+Wright-ScatDataInteEmbe:12}, whose native space is $H^{\tau-1/2}(\partial\Omega)$. \qed

\subsection{Weighted-LS kernel-based collocation method is convergent}\label{sec WLSconv}

{Our final theoretical goal is to demonstrate that the VLS solver can be implemented without using exact quadrature weights, which becomes a WLS problem.}
In this section, we walk the reader through the main ideas of the construction of a convergent method for the WLS solutions in \eqref{VLS mtx sys LS}. Using the weight-discrete norm defined in Theorem~\ref{thm oe}, our goal is to establish another stability estimate involving weighted-discrete norms:
\begin{equation}\label{eq stab ineq 2}
    C^{-1}h^{2q}\| u^h\|^2_{H^{q+2}(\Omega)}
    %\leq \| \mathcal{L}u^h\|^2_{L^2(\Omega)} + h^{-3} \| u^h\|^2_{L^2(\partial\Omega)}
      \leq % \stackrel{\eqref{eq stab ineq}}{\leq}
     \| \mathcal{L} u^h\|^2_{Y,\widetilde{W}_Y} +  h_{}^{-3} \| u^h\|^2_{Z,\widetilde{W}_Z}
       \text{\qquad for all $u^h \in U_{\Phi,X}$},
\end{equation}
which can be proven by overestimation of \eqref{eq stab ineq}.
This inequality holds for some zeroth-order weights $\widetilde{W}_Y,\widetilde{W}_Z$.
The solution to the associated kernel-based \emph{weighted least-squares}  collocation method is defined as
\begin{equation}\label{eq:WLS}
u^h_{{WLS}} \coloneqq \frac{1}{2} \arginf_{v\in U_{\Phi,X}}
\left( \| \mathcal{L} v {-f}\|^2_{Y,\widetilde{W}_Y} +  h_{}^{-3} \| v {-g}\|^2_{Z,\widetilde{W}_Z} \right),
\end{equation}
that requires solving {the} resultant overdetermined system \eqref{VLS mtx sys LS} in the least-squares sense.

As remarked in the previous section, Theorem~\ref{thm oe} has already demonstrated that the continuous and weighted-discrete norms on the boundary are equivalent in the trial space
\begin{equation}\label{bdy norm}
    \| u^h\|^2_{L^2(\partial\Omega)} \sim \| u^h\|^2_{Z,\widetilde{W}_Z}
     \text{\qquad for all $u^h \in U_{\Phi,X}$,}
\end{equation}
using constants $\frac14$ and $4$ in this norm equivalency.

Our remaining task is to handle the norms of $\mathcal{L}u^h\in H^{\tau-2}(\Omega)$ for any $u^h\in U_{\Phi,X}$. We need another kernel $\Psi$ that satisfies the decay condition \eqref{fourierdecay} with a rate of $\tau-2$ rather than $\tau$. For instance, if the Matérn kernel is used, we can simply set $\Psi = \Phi_{\tau-2}$ as defined in \eqref{eq:MaternKernel}.

%The problem is that, in general cases, applying the differential operator $\mathcal{L}$ to $u^h$ may result in a function that is not in any trial space.

{The problem is that, in general cases, applying the differential operator $\mathcal{L}$ to $u^h\in U_{\Phi,X}$ results in a function $\mathcal{L}u^h\not\in U_{\Phi,X}$ that is not in same trial space.}
To bound its $L^2$-norm using Theorem~\ref{thm oe}, we might consider  some projection or best approximation from $H^{\tau-2}(\Omega)$ to $U_{\Psi,X}$. However, this approach is flawed as the nodal values would change after projection,  leading to the disappearance  of the desired weighted-discrete norm in the upper bound from  the analysis.
%To circumvent this issue, our solution is to project using the sufficiently dense set of collocation points $Y$ and weight $\widetilde{{W}}_Y$ for Theorem~\ref{thm oe} to apply.
{
To circumvent this issue, our solution is to project using a sufficiently dense set of collocation points $Y$.
This denseness is quantitatively defined by the condition:
\begin{equation}\label{new denseness}
C(h_Y/ h_{})^{\tau-2 } < {1/8},
\end{equation}
and along with that for Theorem~\ref{thm oe} to apply and, hence, for the existence of the weight $\widetilde{{W}}_Y$ for the norm equivalence inequalities to hold.
}
Instead of $X$, we define the trial space $U_{\Psi,Y} \subset H^{\tau-2}(\Omega)$ on $Y$. We denote the interpolant of $v\in H^{\tau-2}(\Omega)$ on $Y$ in the trial space $U_{\Psi,Y}$ as $s_v=I_{\Psi,Y}v$.

For the sake of simplicity, we use $C$ to denote all generic constants that appear below, bearing in mind that these may assume different values in different contexts.
Using Theorem~\ref{thm oe} and an error estimate for kernel interpolation \cite{Narcowich+WardETAL-Sobobounfuncwith:05}, we get
\begin{eqnarray*}
    \| v\|^2_{L^2(\Omega)} &\leq & \| s_v \|^2_{L^2(\Omega)} + \| v-s_v\|^2_{L^2(\Omega)}
    \\
    &\leq& 4 \| s_v \|^2_{Y,\widetilde{W}_Y}  +C h_Y^{2\tau-4} \| v\|_{H^{\tau-2}(\Omega)}^2
    \\
    &\leq& 4 \| v \|^2_{Y,\widetilde{W}_Y}  + C h_Y^{2\tau-4} \| v\|_{H^{\tau-2}(\Omega)}^2
    \text{\qquad for all $v \in H^{\tau-2}(\Omega)$}.
\end{eqnarray*}
For any $u^h\in U_{\Phi,X}\subset H^{\tau}(\Omega)$, if we set $v= \mathcal{L}u^h$, we can derive the following inequality
\begin{eqnarray*}
  \| \mathcal{L}u^h\|^2_{L^2(\Omega)} &\leq & 4\| \mathcal{L}u^h \|^2_{Y,\widetilde{W}_Y}  + C h_Y^{2\tau-4} \| \mathcal{L}u^h\|_{H^{\tau-2}(\Omega)}^2
  \\
  &\leq &4\| \mathcal{L}u^h \|^2_{Y,\widetilde{W}_Y}  + C h_Y^{2\tau-4} \|u^h\|_{H^{\tau}(\Omega)}^2
  \\
  &\leq &4\| \mathcal{L}u^h \|^2_{Y,\widetilde{W}_Y}  + C h_Y^{2\tau-4}  h_{}^{-2\tau+4+2q}    \|u^h\|_{H^{2+q}(\Omega)}  ^2,
\end{eqnarray*}
which is the result of an inverse inequality found in \cite[Lemma~3.2]{Cheung+LingETAL-leaskerncollmeth:18} of our  prior work.
Now, we can then use this result to
bound \eqref{eq stab ineq} with generic constant $C_{\ref{eq stab ineq}}$
from the above as follows
\begin{eqnarray*}
  C_{\ref{eq stab ineq}}^{-1}h^{2q}\| u^h\|^2_{H^{q+2}(\Omega)}
  &\leq& 4 \Big( \| \mathcal{L}u^h \|^2_{Y,\widetilde{W}_Y} + h_{}^{-3} \| u^h\|^2_{Z,\widetilde{W}_Z} \Big) \\
  &&\qquad\qquad\qquad +  h_{}^{2q}  \Big(  C (h_Y/ h_{})^{\tau-2}  \|u^h\|_{H^{2+q}(\Omega)}\Big)  ^2,
\end{eqnarray*}
which holds for all $u^h\in U_{\Phi,X}$.
%Under a new denseness requirement by using yet another generic constant $C$:
Under {the denseness requirement \eqref{new denseness}} by using yet another generic constant $C$,
%\begin{equation}\label{new denseness}
%$$ C(h_Y/ h_{})^{\tau-2 }  < {1/8}$$
%\end{equation}
c.f., the one in \eqref{denseness} for the non-weighted version,  the weighted-discrete stability inequality in \eqref{eq stab ineq 2} holds for all $u^h\in U_{\Phi,X}$.
Since $\widetilde{{W}}$ only contains bounded quadrature weights, the $L^\infty$ consistency analysis from our previous work directly applies to the new collocation method \eqref{eq:WLS}. We refer readers to \cite{Cheung+LingETAL-leaskerncollmeth:18} for details.

Under the same denseness requirement \eqref{new denseness}, we can invert the above proof to
%\[
%        \| \mathcal{L}u^h\|^2_{L^2(\Omega)} \sim \| \mathcal{L}u^h \|^2_{Y,\widetilde{W}_Y}
%     \text{\qquad for all $u^h \in U_{\Phi,X}$.}
%\]
{
show the above bounds hold in reverse order:
\begin{eqnarray*}
  %\| \mathcal{L}u^h \|^2_{Y,\widetilde{W}_Y}
  \| v \|^2_{Y,\widetilde{W}_Y}
   &\leq&\| s_{v} \|^2_{Y,\widetilde{W}_Y} \leq  4 \| s_v \|^2_{L^2(\Omega)}
  \\ &\leq& 4 \Big( \| v\|^2_{L^2(\Omega)} + \|v- s_v \|^2_{L^2(\Omega)} \Big)
  \\ &\leq& 4 \Big( \| v\|^2_{L^2(\Omega)} +
  C  {h_Y^{2\tau-4} }\| v\|_{H^{\tau-2}(\Omega)}^2 \Big)
    \text{\qquad for all $v \in H^{\tau-2}(\Omega)$,}
\end{eqnarray*}
or, for any $u^h\in U_{\Phi,X}\subset H^{\tau}(\Omega)$, we have
\[
    \| \mathcal{L}u^h \|^2_{Y,\widetilde{W}_Y}
    \leq 4\| \mathcal{L}u^h \|^2_{L^2(\Omega)}
    + 4C h_Y^{2\tau-4}  h_{}^{-2\tau+4+2q}    \|u^h\|_{H^{2+q}(\Omega)}  ^2.
\]
}\noindent
When coupled with the boundary counterpart in \eqref{bdy norm}, we can use the zeroth-order quadrature weights $\widetilde{W}_Y$ and $\widetilde{W}_Z$, whose existence is ensured by Theorem~\ref{thm oe}, to establish  the norm equivalency as follows:
\begin{equation}\label{equiv stab}
    \| \mathcal{L}u^h\|^2_{L^2(\Omega)} + h^{-3} \| u^h\|^2_{L^2(\partial\Omega)}
    \sim
    \| \mathcal{L}u^h \|^2_{Y,\widetilde{W}_Y} + h^{-3} \| u^h\|^2_{Z,\widetilde{W}_Z}
     \text{\qquad for all $u^h \in U_{\Phi,X}$}.
\end{equation}
With this stronger observation, the convergence of  the  WLS  solutions in \eqref{eq:WLS} can be proven via that of the VLS  solutions \eqref{eq:ls var sol}. Consequently, Theorem~\ref{main thm} applies to \eqref{eq:WLS} {resulting in the following convergence theorem}.
%In the following section, we will demonstrate how certain straightforward strategies for approximating the base area in the Riemann sum can yield zeroth-order quadrature weights that show promising numerical results.
%{\begin{theorem}
%Under the assumptions of Theorem \ref{main thm} and results in Theorems \ref{thm stab} and \ref{thm oe},  the error estimate between all trial functions $u^h\in U_{\Phi,X}\subset H^\tau(\Omega)$ and the solution $u\in H^{q+2}(\Omega)$ to \eqref{eq:bvp} is
%\begin{equation}\label{equiv err}
%\|u-u^h\|^2_{H^{q+2}(\Omega)}
%\leq
%C\left(h^{2\tau-2q-4}+h^{2\tau-4}+h^{2\tau-3}\right)
%\| u \|^2_{H^{\tau}(\Omega)},  \quad 0\leq q \leq \tau-2,
%\end{equation}
% for with some constant $C$ that only depends on $q$, $X$, $\Omega$, $\Phi$ and $\mathcal{L}$.
%\end{theorem}}

In the following section, we will demonstrate how straightforward strategies for generating zeroth-order quadrature weights can yield promising numerical results.

%\begin{eqnarray*}
%  \| \mathcal{L}u^h \|^2_{Y,\widetilde{W}_Y}
%  &\leq &  \| s_{v} \|^2_{Y,\widetilde{W}_Y} \leq 4 \| s_v \|^2_{L^2(\Omega)}
%  \\ &\leq& 4 \Big( \| v\|^2_{L^2(\Omega)} + \| s_v -v\|^2_{L^2(\Omega)} \Big)
%  \\ &\leq& 4 \Big( \| v\|^2_{L^2(\Omega)} + \| s_v -v\|^2_{L^2(\Omega)} \Big)
%\end{eqnarray*}
% RBF+Inv Ineq gives the same denseness requirement

%write an opening paragraph for \section{Numerical Examples} with key points:
%0. it sounds crazy to not use any quadrature weight in variational formulation.
%in this section, we now provide arguements and numerical result to show that it is possible in the consider kernel based collocation setting.
%1. linear algebra manipulation can only handle one fix linear system \eqref{VLS mtx sys LS} at a time; thus, it cannot be used to deduce general convergence theory. Yet, provided that the condition number of ${W}$ is moderate, we can see that the WLS solution indeed solves a nearby VLS minimization problem and vice versa.
%2.  if any 0th order quad weights allow the equivalency in \eqref{equiv stab} to hold, then the LS variational solution \eqref{eq:ls var sol} and the weighted LS solution \eqref{eq:WLS} converges at the same rate, but subject to different generic constant. That is, the actual accuracy could be different.
\section{Numerical Examples}\label{sec num exmp}

While the idea of not using any quadrature weight in a VLS formulation might initially seem counterintuitive, this section aims to provide both theoretical arguments and numerical results demonstrating that such an approach is indeed feasible in our considered kernel-based collocation setting. This becomes particularly handy when considering the challenges associated with finding weights in higher dimensions and dealing with scattered data.

For the sets of data points $X$, $Y$, and $Z$ specified in each numerical example, let us detail all tested methods in terms of the linear system as shown in \eqref{VLS mtx sys LS}. All methods use a boundary scaling factor $\vartheta=h^{ {-\frac32}}$ and are defined as follows:
\begin{adjustwidth}{0.5cm}{}
\begin{description}
  \item[VLS-Tp:] Approximation to the variational least-squares solution \eqref{eq:ls var sol}, with integrals approximated by quadrature weights ${W}$ using the Trapezoidal rule.
  \item[WLS-Id:] Weighted least-squares solution \eqref{eq:WLS}, obtained by solving the system in \eqref{VLS mtx sys LS} with a constant weight ${W}=I$.
  \item[WLS-Rd:] This method is a counterpart to WLS-Id but uses a random weight ${W}$ where $\text{Diag}({W})$ is distributed uniformly in the range [0.5, 1.5].
\end{description}
\end{adjustwidth}
If the constant and random ${W}$, as zeroth-order quadrature weights, could  uphold the equivalency in \eqref{equiv stab}, then VLS-Tp and the two WLS solutions should exhibit the same convergence rate. However, they may be dictated by different generic constants. As a result, despite similar convergence rates, the actual accuracy of these methods may vary.

Denote  $\alpha_{\mathcal{W}}$ and $\alpha_I$ {as} the coefficient vectors of the WLS solutions with %some
{any}
positive weights $\mathcal{W}$ and $I$, respectively.  Let us also rewrite the linear system \eqref{VLS mtx sys LS} in a simplified form  $A\alpha=b$. We then have the following:
\begin{eqnarray*}
  \|{\mathcal{W}}(A\alpha_\mathcal{W}-b)\| &\leq&  \|{\mathcal{W}}\|\,\|A\alpha_\mathcal{W}- b\|
  \\ &\leq& \|{\mathcal{W}}\|\,\|A\alpha_I- b\| = w_{\max}  \|A\alpha_I- b\|
.
\end{eqnarray*}
By reversing the argument, we find that $\|A\alpha_I-b\| \leq  {w_{\min}^{-1}} \|{\mathcal{W}}(A\alpha_\mathcal{W}- b)\|$.
If we substitute $\alpha_I$ into the weighted objective function, we get
\[
    \|{\mathcal{W}}(A\alpha_I-b)\| \leq w_{\max}\|A\alpha_I- b\| \leq \kappa(\mathcal{W})\|A\alpha_I- b\|.
\]
This suggests that if {the condition number} $\kappa(\mathcal{W})$ is small, then the WLS-Id solution almost solves the problem with weight $\mathcal{W}$. Conversely, we also have
\[
    \|A\alpha_\mathcal{W}- b\| \leq {w_{\min} }  \|{\mathcal{W}}(A\alpha_\mathcal{W}- b)\|  \leq \kappa(\mathcal{W})  \|{\mathcal{W}}(A\alpha_\mathcal{W}- b)\|.
\]
It is important to note that linear algebra manipulations can only address a single linear system at a time. Therefore, they cannot be used to derive a general convergence theory. But, as long as $\kappa(\mathcal{W})$ is moderate, the above calculations  show that the WLS-Id solution nearly solves the WLS optimization problem with weights $\mathcal{W}$, achieving close (but of course, somewhat larger) objective values, and vice versa. This observation suggests that WLS-Id and WLS-Rd always exhibit similar numerical performance, a claim we intend to verify. Additionally, this motivates us to conduct tests on collocation points $Y$ and $Z$ with large mesh ratios, which lead to an increase in the condition numbers of the Trapezoidal weights as $N_Y$ increases.

\subsection*{Example 1 (Observing predicted convergence rates)}
For the sake of reproducibility and to provide clear evidence supporting our theories, we focus on simple numerical setups. We consider the domain $\Omega=(-1,1)^2$ with regular trial centers $X$, varying the squared number $N_X$ up to $81^2$. Interior collocation points $Y$ are constructed by tensor product grid data, allowing for straightforward calculations to obtain the Trapezoidal weights. Here is the exact procedure:
For each oversampling ratio $\gamma \in {1, 1.5, \ldots, 4}$, we generate an initial regular grid $P\subset \overline{\Omega}$ with $N_Y=\lceil({\gamma N_X})^{1/2}\rceil^2 \approx \gamma N_X$. Then, we apply a coordinate  transform $T:\overline{\Omega}\to \overline{\Omega}$ to $P$, and set $Y=T(P)$ and $Z=T(P)\cap \partial\Omega$.
The fill distances $h_Y$ and $h_Z$ for these transformed points are of the same magnitude. In this example, we do not consider varying the $h_Y/h_Z$ ratio based on the numerical results from our prior work \cite{Cheung+LingETAL-leaskerncollmeth:18}. In there, the numerical method is WLS with ${W}=I$ and boundary scaling factors
$\vartheta= \big( (h_Z/h_Y)^{(d/2-q)\theta} h_Z^{-2\theta} \big)^{1/2}$
for all $\theta\geq0$. Aside from the problem of ill-conditioning, the WLS methods with a full range of scaling factors demonstrated the same $H^2$-convergence rate. The effects of using different scaling factors in that context are similar to the effects of varying the $h_Y/h_Z$ ratio in this study.
Two test problems were considered:\smallskip
\begin{description}
\item[PDE 1.] We consider the modified Helmholtz equation $\Delta u - u = f$ subject to the Dirichlet boundary condition. The exact solution $u^*  = \text{sech}(\pi(x-0.5)) + \text{sech}(\pi(y-0.75))$ is used to generate PDE data. The sine-transform \cite{Tang+Trummer-Bounlayeresopseu:96}, defined component-wise by $T(P)=\sin( \pi P/2)$, is used in this case.

\item[PDE 2.] We examine the Poisson equation $\nabla \boldsymbol\cdot( c\, \nabla u ) = f$ with a strictly negative diffusion tensor $c(x,y)=-\arctan(100(x+y))-3\pi/4$, subject to the Dirichlet boundary condition. The exact solution is taken to be $u^* =(1-x^2)\cos(\pi y/2)$. Here, we use the componentwise signed-square transform  $T(P)=\text{sgn}(P)P^2$ to define collocation points and collocate the PDE in non-divergence form.
\end{description}
Note that the transformations $T$ selected in PDEs 1 and  2 were chosen for demonstration purposes, and not based on specific information from the PDEs themselves. Figure~\ref{fig Y}(a)--(b) shows the distributions of $N_Y=40^2$ sine-transformed and signed-squared collocation points $Y$ for PDEs 1 and 2 respectively. The background color in both subfigures represents the color contour of the PDE source function $f$.

\begin{figure}%[!p]
	\centering
	\begin{tabular}{ccc}
		\begin{overpic}
			[width=.31\textwidth,trim= 50 3 70 5, clip=true,tics=10 ]{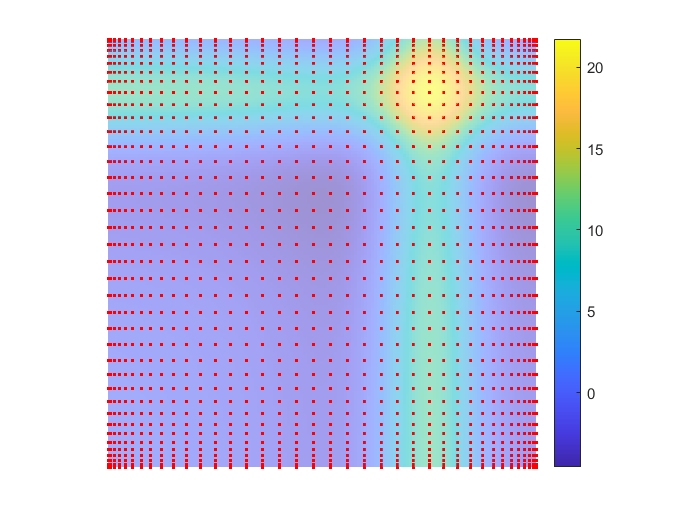}
			\put(20,2) {\scriptsize (a) $Y$ for PDE 1}
		\end{overpic}
		&
		\begin{overpic}
			[width=.31\textwidth,trim= 50 3 70 5, clip=true,tics=10 ]{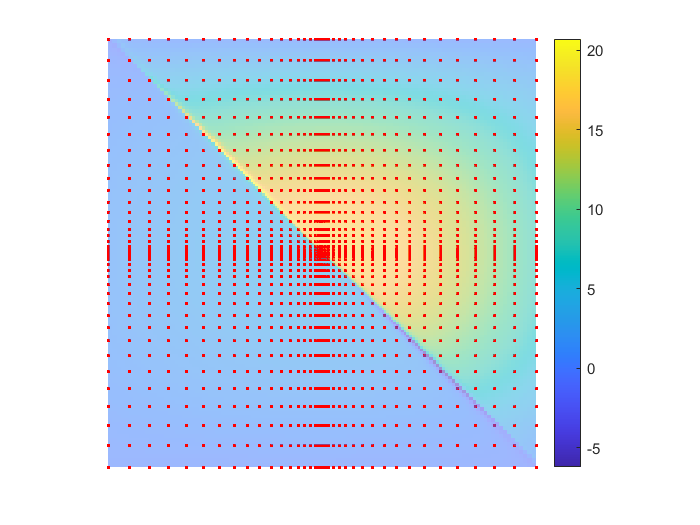}
			\put(20,2) {\scriptsize (b) $Y$ for PDE 2}
		\end{overpic}
		&
		\begin{overpic}
			[width=.228\textwidth,trim= 85 3 80 5, clip=true,tics=10 ]{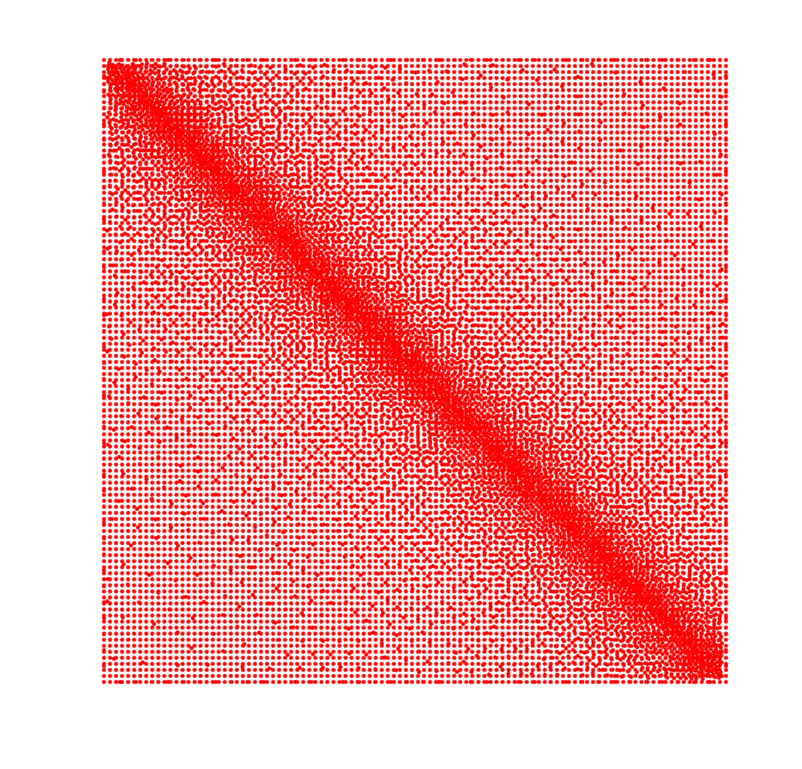}
			\put(10,2) {\scriptsize (c) $Y_2$ for PDE 4}
		\end{overpic}
	\end{tabular}
	\caption{Distribution of $N_Y=40^2$ sine-transformed (PDE 1), signed-squared (PDE 2), and NodeLab \cite{Mishra-Node:19} generated (PDE4) collocation points, overlaying the color contour of the corresponding source function $f$ (PDEs 1 and 2).}\label{fig Y}
\end{figure}

We use the Mat\'{e}rn kernel  \eqref{eq:MaternKernel} with orders $\tau=4,5,6$ and scale $r \leftarrow  \epsilon r$ with a shape parameter $\epsilon=5$ for all $\tau$. Such selections of smoothness order and shape parameters allow us to observe typical behavior in kernel-based collocation methods.
We report relative $L^2$ errors, which are computed using $\ell^2(P)$-norms of the nodal values of the error (or difference) function and the exact (or reference) solution on a regular point set $P$ consisting of $86^2$ points.
The convergence profiles for PDE 1 and PDE 2 are shown in Figures~\ref{fig err1} and \ref{fig err2}, respectively, each containing three subfigures for each kernel smoothness $\tau$.
Errors associated with a small oversampling ratio $\gamma \leq 2$ are shown individually, while those with $\gamma\geq 2.5$ are collectively represented in shaded areas.
The predicted convergence rate from Theorem~\ref{main thm} is simply $\tau$, and a reference slope has been added for ease of comparison.

\begin{figure}%[!p]
	\centering
		\begin{overpic}
			[height=.65\textwidth,trim= 70 85 80 50, clip=true,tics=10]{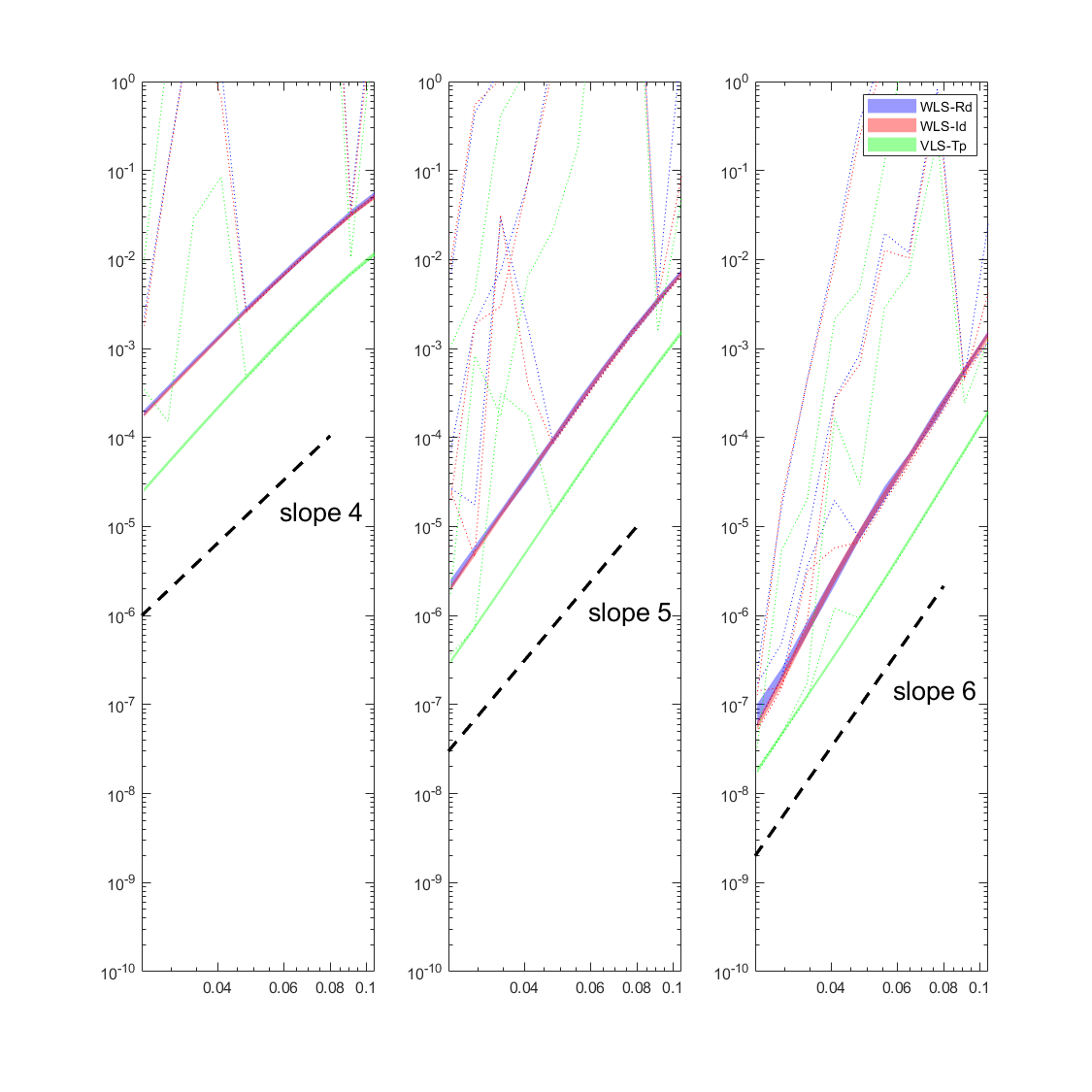}
			\put(-5,40) {\rotatebox{90}{PDE 1}}
            \put(14.5,-2) {\scriptsize$\tau=4$}
            \put(48.0,-2) {\scriptsize$\tau=5$}
            \put(81.5,-2) {\scriptsize$\tau=6$}
		\end{overpic}
	\caption{Relative $L^2$ convergence profiles of VLS-Tp, WLS-Id, and WLS-Rd for PDE 1 using the Mat\'ern kernel with smoothness orders $\tau=4,5,6$.
Errors associated with oversampling ratio $\gamma \in \{1.0,1.5,2.0\}$ are shown individually, while those with $\gamma\in\{ 2.5,3.0,3.5,4.0\}$ are collectively represented in a shaded area.
The predicted convergence rate from Theorem~\ref{main thm} is represented by the reference slope.}\label{fig err1}
\end{figure}
\begin{figure}%[!p]
	\centering
		\begin{overpic}
			[height=.65\textwidth,trim= 70 85 80 50, clip=true,tics=10]{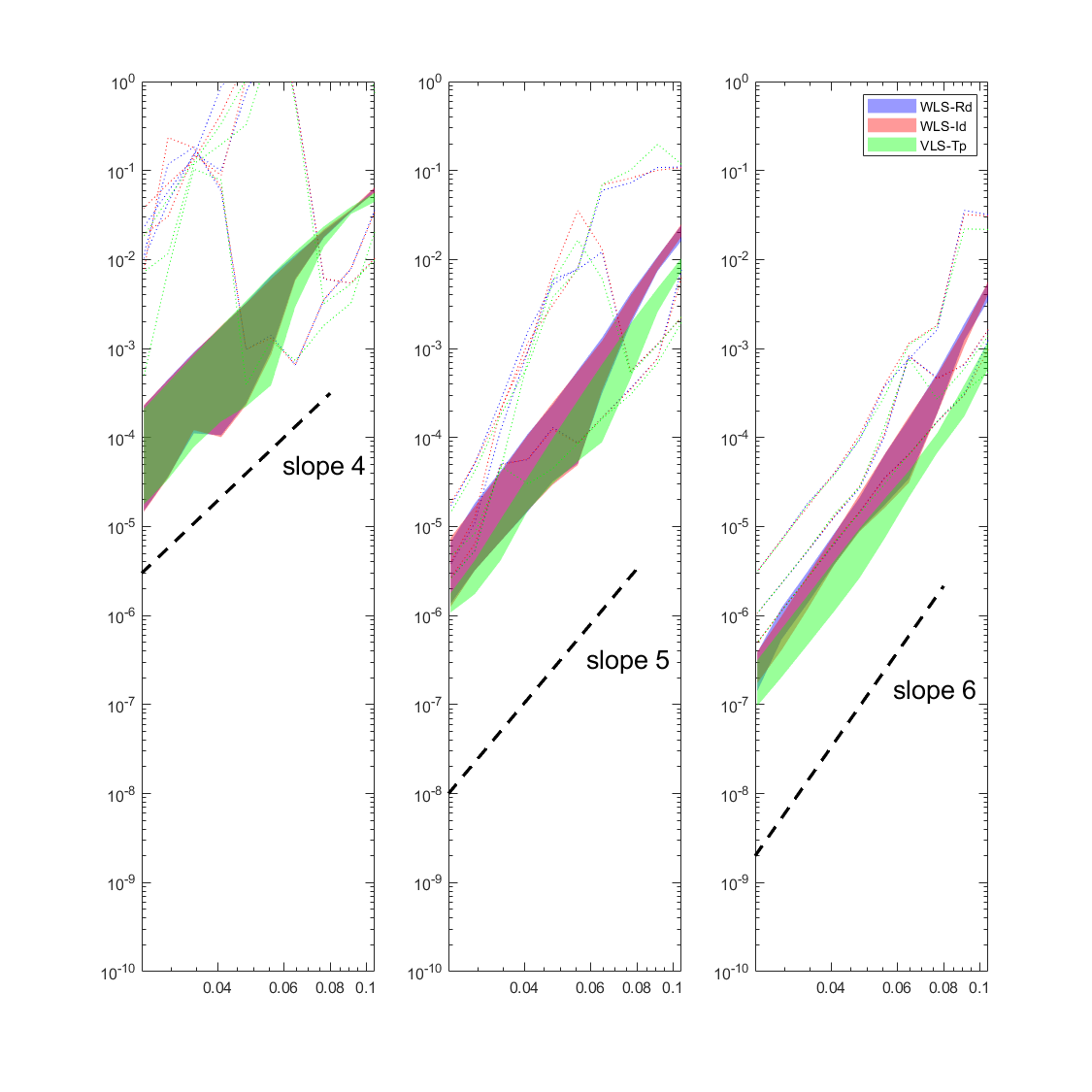}
			\put(-5,40) {\rotatebox{90}{PDE 2}}
            \put(14.5,-2) {\scriptsize$\tau=4$}
            \put(48.0,-2) {\scriptsize$\tau=5$}
            \put(81.5,-2) {\scriptsize$\tau=6$}
		\end{overpic}
	\caption{Relative $L^2$ convergence profiles for PDE 2, presented in the same format as in Figure~\ref{fig err1}.}\label{fig err2}
\end{figure}

PDE 1 is a relatively easy test problem owing to its smooth and artificial analytic solution. From Figure~\ref{fig err1}, it is clear that a sufficiently high oversampling ratio is necessary. Most importantly, the two WLS-xx methods and VLS-Tp all converge at the same rate, a result that validates the convergence theory of WLS as discussed in Section~\ref{sec WLSconv}. Notably, the convergence is not impacted by the increasing condition number $\kappa({W})$ of the Trapezoidal weight ${W}$, which grows at a rate of $\mathcal{O}(N_Y^3)$ in both coordinate transforms. The accuracy of WLS-Id and WLS-Rd is nearly identical.

Conversely, PDE 2 is more difficult, even with its
%agreeable
analytic solution. This is attributed to the source function that changes rapidly along the line $x+y=0$. The error profiles are not as linear as in PDE 1, yet the trend aligns with the forecasted convergence rate. The accuracy advantage of VLS-Tp over the other two WLS-xx methods is not immediately apparent, but it appears to be more accurate when $\tau=6$.

\subsection*{Example 2 (Superconvergence with zeroth order quadrature)}
Superconvergence in the context of kernel-based interpolation \cite{Schaback-Supekerninte:18} is a complex interplay between the smoothness and the localization of the function. This example aims  to replicate similar results within the framework of VLS and WLS methods. \smallskip%%
\begin{description}
\item[PDE 3.] We consider the modified Helmholtz equation $\Delta u - u = f$ subject to the Dirichlet boundary condition. The exact solution is a  Gaussian function $u^*(x,y)=\exp(-10[(x-x_0)^2+(y-y_0)^2])$. The signed-square transform is used to increase collocation point density near the origin.
\end{description}
In the first test, we set $[x_0,y_0]=[0,0]$ which results in a high density of collocation points within the support of the Gaussian. In the second test, we choose $[x_0,y_0]=[0.5,0.75]$ for a contrary setup with lower point density. The resulting error profiles for both scenarios are displayed in Figures~\ref{fig err3ctr} and \ref{fig err3sft}.
Figure~\ref{fig err3ctr} exhibits high accuracy and clear superconvergence, demonstrating the benefits of having a high-density collocation point set at the right place. On the other hand, Figure~\ref{fig err3sft} shows an even faster convergence rate, but the approximation accuracy compared to Figure~\ref{fig err3ctr} is less. This highlights the influence of point density on the balance between convergence rate and approximation accuracy in the VLS and WLS methods. All findings here are consistent with our observations in Example~1, where all methods converged at the same rate.

\begin{figure}%[!p]
	\centering
		\begin{overpic}
			[height=.62\textwidth,trim= 70 85 80 50, clip=true,tics=10]{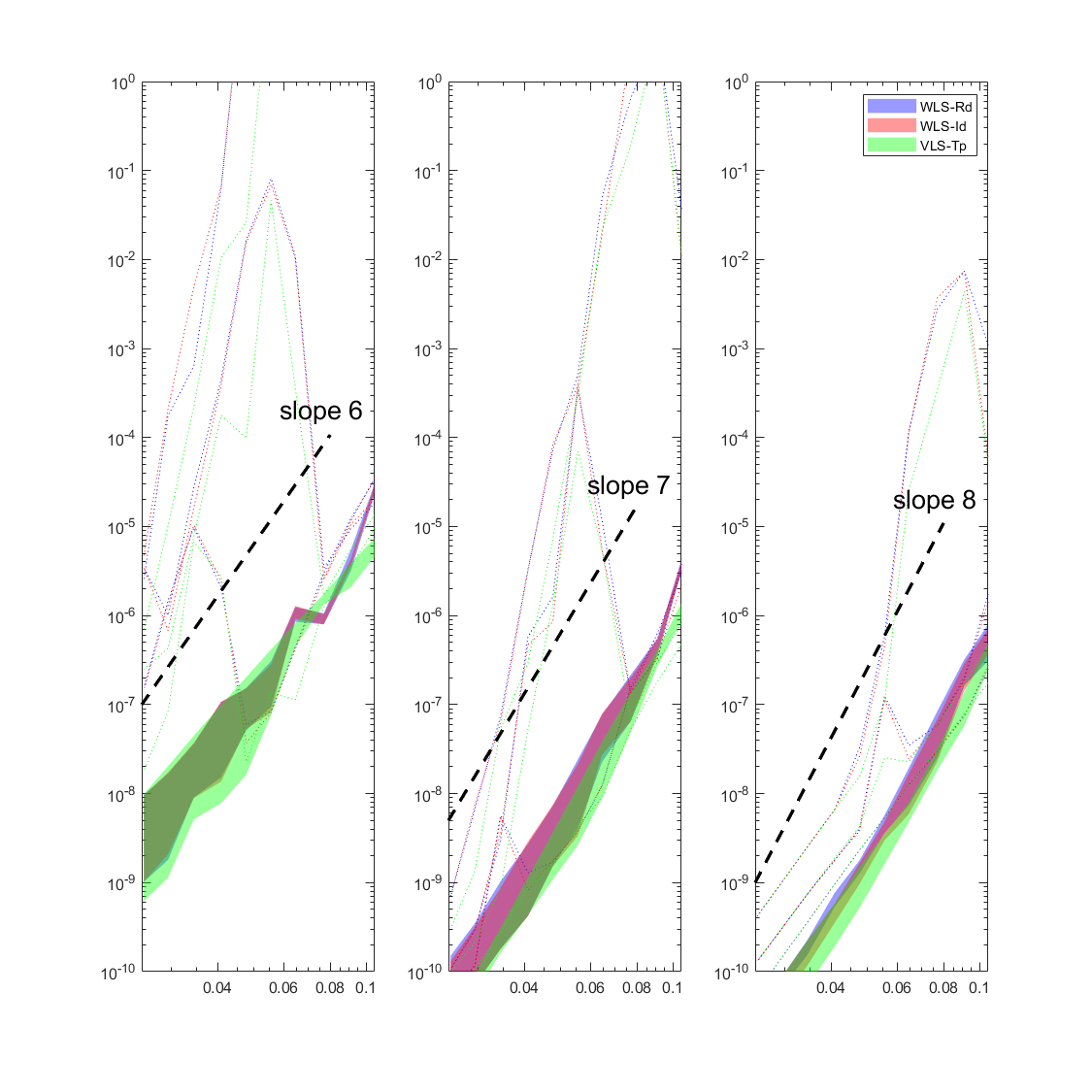}
			\put(-5,30) {\rotatebox{90}{PDE 3 with $[x_0,y_0]=[0,0]$}}
            \put(14.5,-2) {\scriptsize$\tau=4$}
            \put(48.0,-2) {\scriptsize$\tau=5$}
            \put(81.5,-2) {\scriptsize$\tau=6$}
		\end{overpic}
	\caption{Relative $L^2$ convergence profiles for Example 2, solving the modified Helmholtz equation $\Delta u - u = f$ with a Gaussian function centered at $[0,0]$ as the exact solution, presented in the same format as in Figure~\ref{fig err1}.}\label{fig err3ctr}
%\end{figure}
%\begin{figure}%[!p]
	\centering
		\begin{overpic}
			[height=.62\textwidth,trim= 70 85 80 50, clip=true,tics=10]{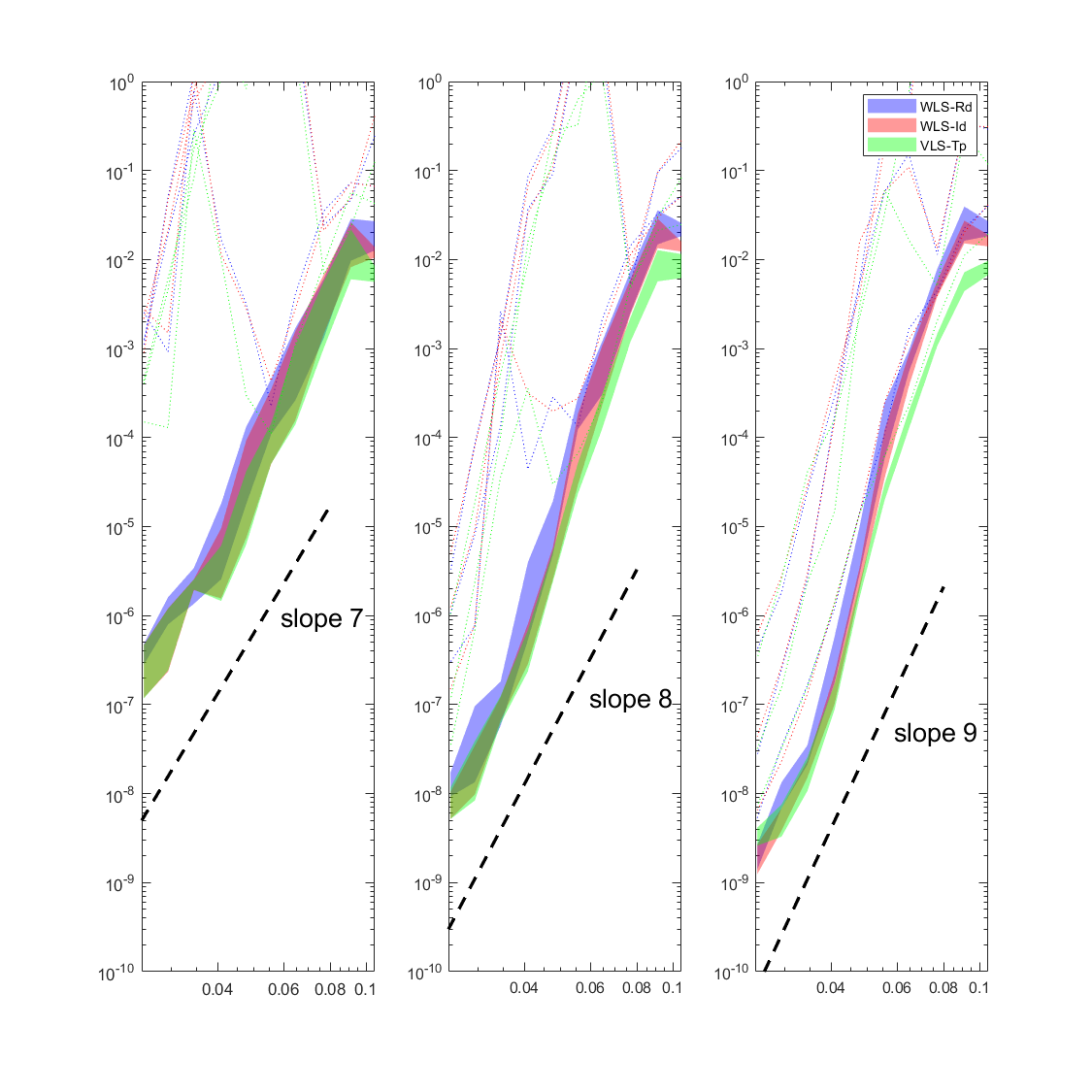}
			\put(-5,30) {\rotatebox{90}{PDE 3 with $[x_0,y_0]=[0.5,0.75]$}}
            \put(14.5,-2) {\scriptsize$\tau=4$}
            \put(48.0,-2) {\scriptsize$\tau=5$}
            \put(81.5,-2) {\scriptsize$\tau=6$}
		\end{overpic}
	\caption{Relative $L^2$ convergence profiles for the same equation as in Figure~\ref{fig err3ctr} but with the Gaussian function centered at $[0.5,0.75]$.}\label{fig err3sft}
\end{figure}

\subsection*{Example 3 (Collocation in divergence form with nearly singular diffusion tensor)}
In this final example, we examine a PDE characterized by a simple data function, but without an exact solution:\smallskip
\begin{description}
\item[PDE 4.] We consider the Poisson equation $\nabla \boldsymbol\cdot( c \,\nabla u ) = 1$, equipped with a strictly negative diffusion tensor $c(x,y)=-\arctan(100(x+y))-3\pi/4$, and subject to the Dirichlet boundary condition $u_{|\partial\Omega}=0$. Given a set of trial center $X$, we employ an oversampling ratio $\gamma=2.5$ to generate regular $Y_1$ and $Z$. The NodeLab \cite{Mishra-Node:19} package is used to produce scattered 7338 data points as $Y_2$, densely populating along $x-y=0$, as illustrated in Figure~\ref{fig Y}(c). Then, we take $Y=Y_1\cup Y_2$ as the set of PDE collocation points. The PDE is collocated in divergence form.
\end{description}
In these examples, we only focus on divergence form PDEs because non-divergence form PDEs present considerable challenges for many methods, including both our VLS-Tp (on some tensor grid $Y$) and WLS-Id methods and also RBF-FD methods. In fact, these methods often fail to solve non-divergence form PDEs correctly.

To validate the quality of our numerical approximation,  we compute a reference solution using the finite element method with highly refined discretizations of 19968 triangles and 10145 nodes.
Figure~\ref{fig sol4} shows the WLS-Id solutions for PDE 4 with a kernel smoothness order of $\tau=5$, calculated for $N_X=41^2$ and $81^2$ with surface color based on the absolute difference between the WLS-Id solution and  the reference solution.
We use a single color range determined by $N_X=41^2$ in both subfigures for easy  comparison.
In these runs, $(N_Y,N_{Y_1},N_Z)$ are $(18154,\,10816,\,412)$ and $(48954,\,41515,\,812)$ for $N_X=41^2$ and $N_X=81^2$ respectively. Even though there are relatively fewer collocation points on the boundary (which can often lead to lower accuracy), this fact does not cause significant accuracy problems.

\begin{figure}%[!p]
	\centering
		\begin{overpic}
			[height=.45\textwidth,trim= 30 50 40 20, clip=true,tics=10]{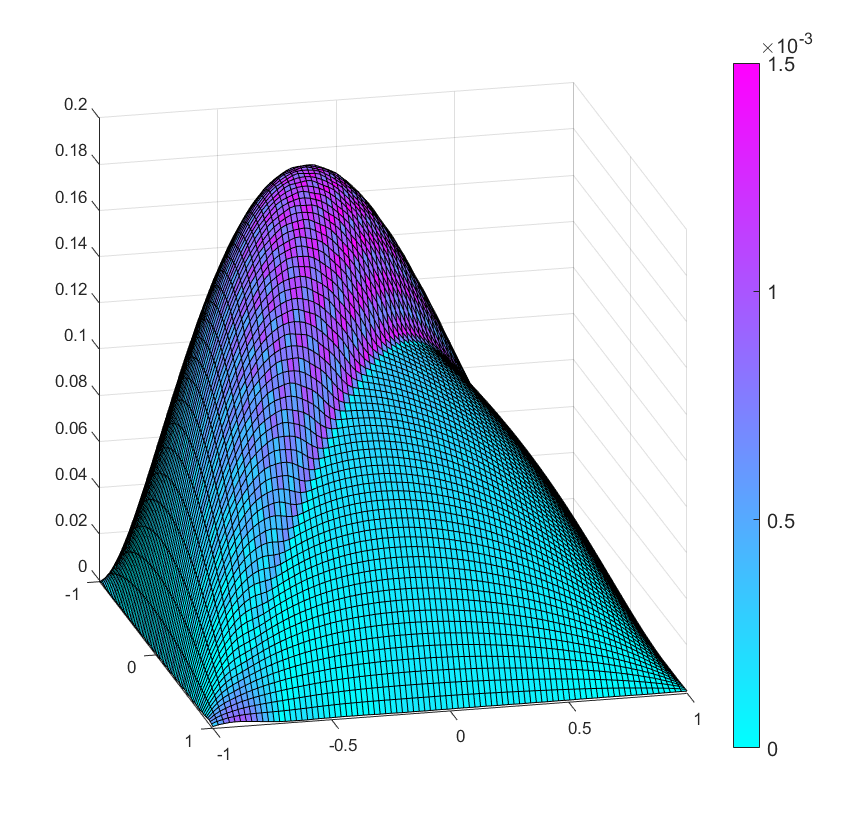}
            \put(30,-3) {\scriptsize (a) $N_X=41^2$}
		\end{overpic}
		\begin{overpic}
			[height=.45\textwidth,trim= 30 50 100 20, clip=true,tics=10]{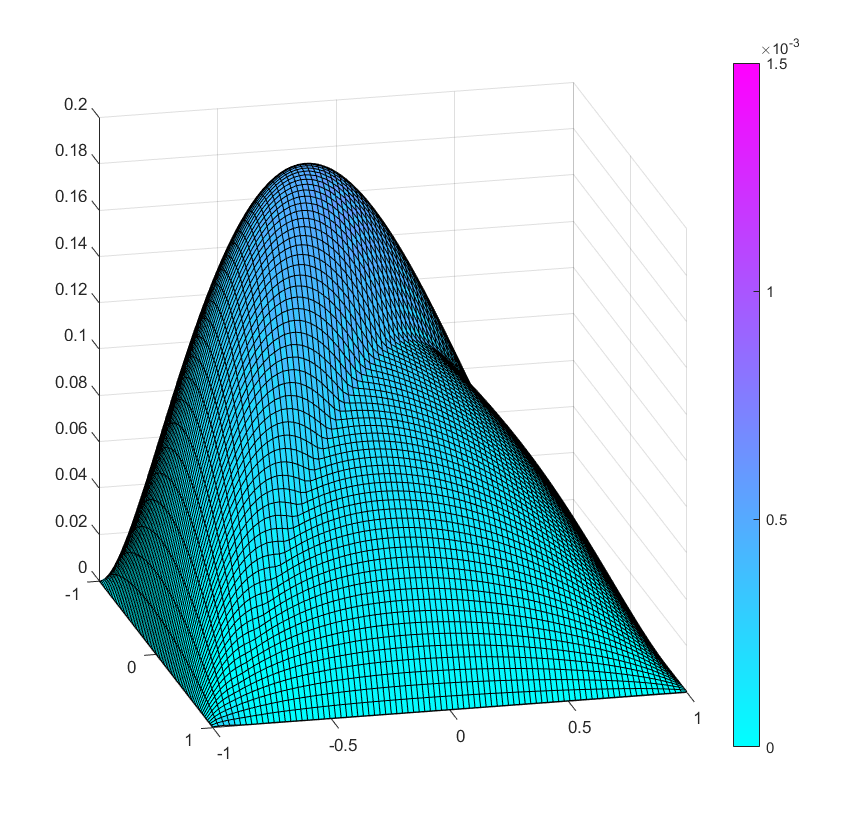}
            \put(30,-3) {\scriptsize (b) $N_X=81^2$}
		\end{overpic}
	\caption{WLS-Id solutions for PDE 4 with kernel smoothness order $\tau=5$ for (a) $N_X=41^2$ and (b) $81^2$. The color map represents the absolute difference between the computed WLS-Id solution and the reference solution obtained with a highly refined finite element method.}\label{fig sol4}
\end{figure}

\section{Conclusion}\label{sec conc}

In this paper, we have presented a comprehensive theoretical analysis of variational and weighted least-squares kernel-based methods, filling a significant gap in the existing literature. We provided rigorous proofs for two stability inequalities, crucial for understanding the convergence behavior of these methods.
Our work demonstrated that exact quadrature weights play a non-essential role in convergence and  the accuracy of the solutions is not sensitive to the ratio of interior to boundary collocation points.
These theoretical findings were supported  by a series of numerical experiments, which showcased the efficiency and accuracy of these methods on data sets with sufficiently large mesh ratios. The results confirmed our theoretical predictions regarding the performance of the variational method, the kernel-based collocation method, and our novel weighted least-squares collocation method. In all cases, we observed similar performance in variational and weighted least-squares kernel-based methods when the oversampling ratio was large.

\section*{Acknowledgement}

This work was supported by
the General Research Fund (GRF No. 12301419, 12301520, 12301021) of Hong Kong Research Grant Council,
the Opening Project of Guangdong Province Key Laboratory of Computation Science at the Sun Yat-sen University (Project No. 2021014), National Natural Science Foundation of China (NSFC No. 12361086, 12001261, 12371379), Jiangxi Provincial Natural Science Foundation (No. 20212BAB211020)
and Changsha Natural Science Foundation (Project No. KQ2202069).

%\section*{References}
\bibliographystyle{elsarticle-num}
\bibliography{CLS-Stability_R2-minimal}%,lling,others}

\end{document}